\newtheorem{thm}{Theorem}
\newtheorem{lem}{Lemma}
\newcommand{\Rnot}{\sigma_0}
\newcommand{\Sinf}{x_\infty}
\newcommand{\dom}{{\mathcal D}}
\newcommand{\ymax}{y_\text{max}}
\newenvironment{AMS}{\par\textbf{AMS subject classification.}}{\par}
\begin{document}
\title{Optimal control of an SIR epidemic through finite-time non-pharmaceutical intervention}
\author{
  David I. Ketcheson\thanks{Computer, Electrical, and Mathematical Sciences \& Engineering Division,
King Abdullah University of Science and Technology, 4700 KAUST, Thuwal
23955, Saudi Arabia. (david.ketcheson@kaust.edu.sa)}
}
\maketitle

\abstract{
We consider the problem of controlling an SIR-model epidemic
by temporarily reducing the rate of contact within a population.
The control takes the form of a multiplicative reduction in the contact rate
of infectious individuals.  The control is allowed to be applied only over
a finite time interval, while the objective is to minimize the total number of
individuals infected in the long-time limit, subject to some cost function for
the control.  We first consider the no-cost scenario and analytically determine
the optimal control and solution.  We then study solutions when a cost of intervention
is included, as well as a cost associated with overwhelming the available medical resources.
Examples are studied through the numerical solution of the associated Hamilton-Jacobi-Bellman
equation.  Finally, we provide some examples related directly to the current pandemic.
}

\begin{AMS}
    92D30, 34H05, 49N90, 92-10, 49L12
\end{AMS}

\section{Problem description and assumptions}
The classical SIR model of Kermack \& Mckendrick \cite{kermack1927contribution} is
\begin{subequations} \label{SIR}
\begin{align} 
    x'(t) & = -\gamma \Rnot y(t) x(t) \label{eq:x} \\
    y'(t) & = \gamma \Rnot y(t) x(t) - \gamma y(t) \label{eq:y} \\
    (x(0),y(0)) & \in \dom := \{(x_0,y_0) : x_0 > 0, y_0 > 0, x_0+y_0 \le 1\},
\end{align}
\end{subequations}
where $x(t), y(t)$ represent the susceptible and infected populations
respectively, while the recovered population is $z(t)=1-x(t)-y(t)$.  The region
$\dom$ is forward-invariant and a unique solution exists for all time
\cite{hethcote2000mathematics}.  While
the temporal dynamics of \eqref{SIR} depend on both $\Rnot$ and $\gamma$, the set
of trajectories depends only on the basic reproduction number 
$\Rnot$.  Dynamics for two values of $\Rnot$ are
shown in Figure \ref{fig:dynamics}.  

The system \eqref{SIR} is at equilibrium if $y(t)=0$.  This equilibrium is stable if and
only if $x(t)\le 1/\Rnot$, a condition referred to as {\em herd immunity}.  If
this condition is not satisfied at
the initial time, then $y(t)$ will first increase until it is, and then decrease,
approaching zero asymptotically.  The SIR model assumes that recovery
confers permanent immunity.

\begin{figure}
    \centering
    \subfigure[$\sigma=3$]{\label{sigma3}\includegraphics[width=0.49\textwidth]{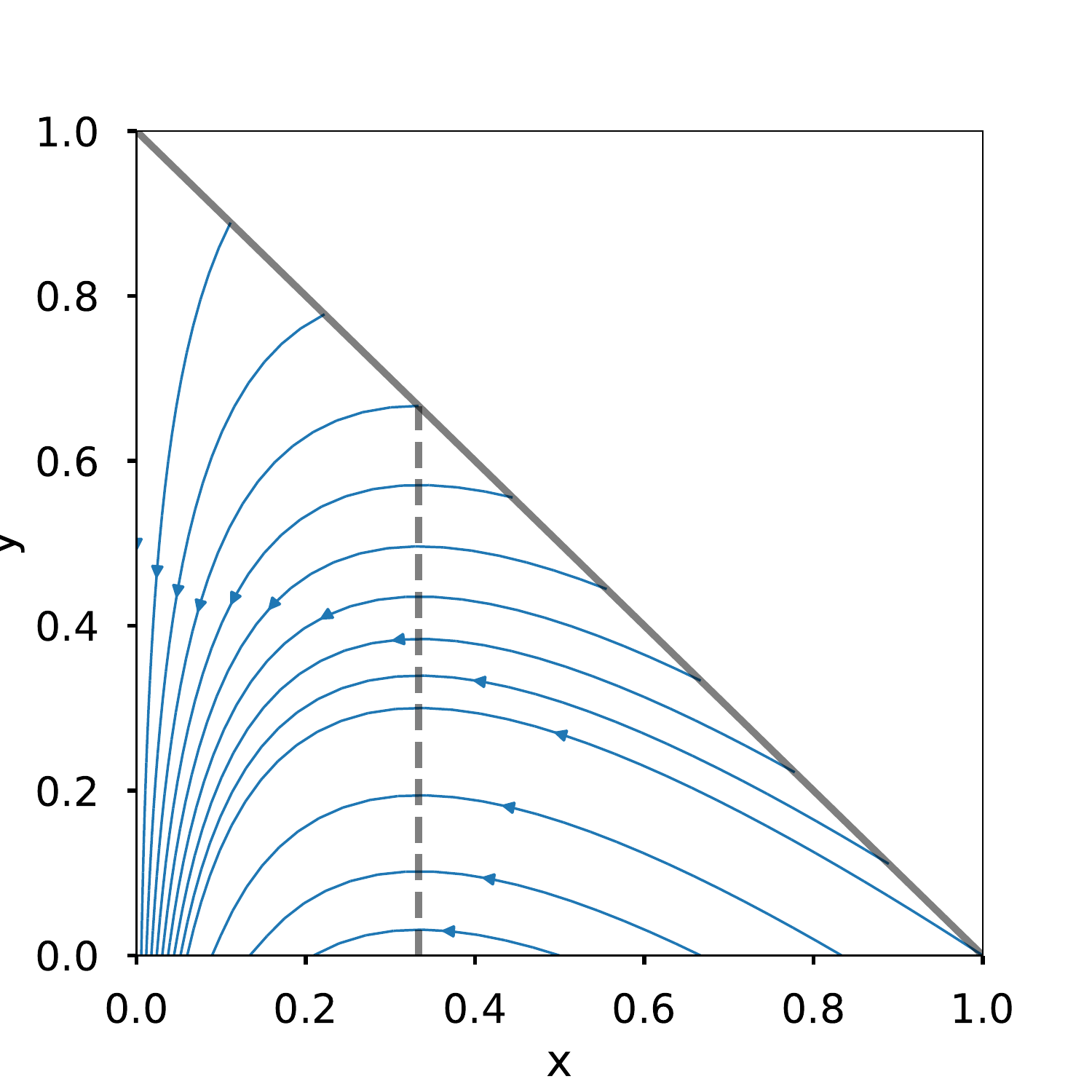}}
    \subfigure[$\sigma=1.5$]{\label{sigma15}\includegraphics[width=0.49\textwidth]{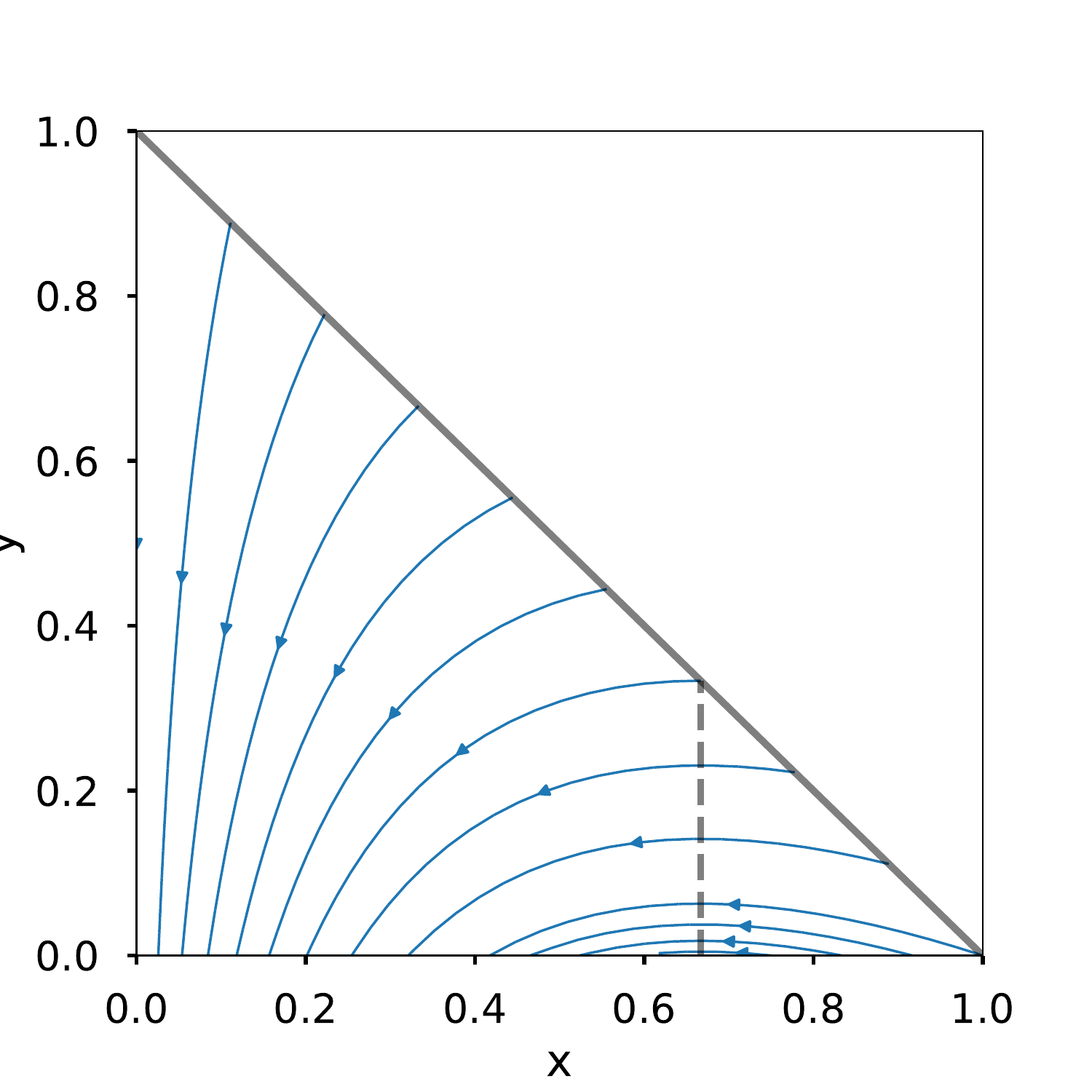}}
    \caption{Dynamics of the SIR model \eqref{SIR} for two values of the basic reproduction number.
            The critical value $x=1/\sigma$ is shown with a dashed line.\label{fig:dynamics}}
\end{figure}

For many diseases affecting humans, herd immunity is achieved
through vaccination of a sufficient portion of the population.  Herein
we assume a vaccine is unavailable, so that herd immunity can only be achieved
through infection and recovery.
Our goal is to minimize $z_\infty := \lim_{t \to \infty} z(t)$, or equivalently
(since $y_\infty=0$)
to maximize the long-time limit of the susceptible fraction:
$\Sinf = \lim_{t\to\infty} x(t)$.
This has the effect of minimizing
the number of eventual deaths, which would be proportional to $z_\infty$.

This is equivalent to minimizing the number of deaths, if we assume that
some fixed fraction of the recovered population $z(t)$ dies from the disease.
From the foregoing it is clear that $\Sinf \le 1/\Rnot$.  The difference
$1/\Rnot-\Sinf$ is referred to as {\em epidemiological overshoot}.
For COVID-19, a review of early estimates of $\Rnot$ can be found in
\cite[Table 1]{liu2020reproductive}, with mean 3.28 and median 2.79.
In accordance with these estimates, we use a value $\Rnot=3$ in most of
the examples in this work.  With this value, the SIR model implies that eventually
at least two-thirds of the world population will eventually have COVID-19 antibodies;
this number is likely to be significantly higher in reality due to epidemiological
overshoot.  For instance, it can be seen from Figure \ref{sigma3} that, starting
from a fully susceptible population and a small number of infected individuals,
in the absence of control the SIR model predicts that over $90\%$ of the population
would be infected.

This overshoot can be reduced through non-pharmaceutical intervention (NPI),
which is simply a means to reduce contact between infected and susceptible
individuals; reductions of this kind occurred for instance as a result
of NPIs imposed during the 1918 flu pandemic \cite{bootsma2007effect}.  We model a NPI
control via a time-dependent reproduction number $\sigma(t)\in[0,\Rnot]$ with the system
\begin{subequations} \label{SIRq}
\begin{align}
    x'(t) & = -\gamma \sigma(t) y x \\
    y'(t) & = \gamma \sigma(t) y x - \gamma y \\
    (x(0),y(0)) & \in \dom := \{(x_0,y_0) : x_0 > 0, y_0 > 0, x_0+y_0 \le 1\}.
\end{align}
\end{subequations}
A temporary reduction in $\sigma$ can account for both
population-wide interventions and interventions specific to identified infectious
(or possibly infectious) individuals.  The SIR model with a time-dependent
reproduction number (or equivalently, a time-dependent contact rate) has been 
considered before, for instance in \cite{bootsma2007effect,sun2020tracking}.

Typically, an epidemic does not result in substantial {\em permanent} change in the contact rate of
a population.  We therefore assume 
\begin{align} \label{q-shortterm}
    \sigma(t)=\Rnot \text{ for } t>T,
\end{align}
i.e., that intervention can only be applied over a finite interval $t \in [0,T]$.
Since $x_\infty=1/\sigma_0$ only at the single point $(x=1/\sigma_0,y=0)$, and since 
the $y=0$ axis cannot be reached in a finite time, \eqref{q-shortterm} implies
that any solution must have $x_\infty<1/\sigma_0$.

We state the control problem as follows:
\begin{align} \label{eq:inf-time-problem}
\begin{aligned}
& \text{Given } (x_0, y_0)\in\dom, \sigma_0>0, T>0,  \\
& \text{ choose an admissible control } \sigma(t): [0,T] \to [0,\Rnot] \text{ to minimize }  \\
&     J(x(t),y(t),\sigma(t)) = -\lim_{t\to\infty} x(t) + \int_0^T L(x(t),y(t),\sigma(t)) dt \\
& \text{ subject to } \eqref{SIRq}.
\end{aligned}
\end{align}
Here $J$ is the objective function that accounts for the desire to
minimize infections as well as a running cost of imposing control.
We assume throughout that $L$ is convex with respect to $q(t)=1-\sigma/\Rnot$ and
bounded uniformly by a constant for all $(x,y)\in \dom$, $\sigma\in[0,\sigma_0]$.

There is a large body of work on compartmental epidemiological models
and control for such models; see e.g. \cite{hethcote2000mathematics,lenhart2007optimal}
and references therein.  A number of works focus on optimal control
through vaccination; see e.g. \cite{kar2011stability}.
Other works, such as \cite{yan2008optimal,safi2013dynamics,agusto2013optimal}
focus on explicit modeling of and/or control through quarantined and isolated individuals.
A review of work on optimal control in compartmental epidemiological models is
presented in \cite{sharomi2017optimal},
along with the formulation of necessary conditions (based on
Pontryagin's maximum principle) for various extensions of the SIR model.
For modeling and control based on even more detailed models incorporating
spatial spread and human networks, see e.g. \cite{ferguson2005strategies}.

\subsection{Objectives and contributions}

The modeling and assumptions in the present work are motivated by the
current COVID-19 epidemic, which so far is being managed through broad
NPIs and without a vaccine.  In order to understand the effects of NPIs
imposed on an entire population, we stick to the simple model \eqref{SIRq}
rather than explicitly modeling quarantined individuals.
Since such population-wide measures cannot be maintained indefinitely, we
invoke the finite-time control assumption \eqref{q-shortterm}.
This assumption is not new (see e.g. \cite{greenhalgh1988some}),
but unlike previous works our objective function is still based on the
long-term outcome (rather than the outcome at time $T$).
This drastically changes the nature of optimal solutions.

Although the broad motivation for this work comes from the current epidemic,
our primary objective is to understand general properties of optimal controls
for the variable-$\sigma$ SIR system \eqref{SIRq}.  To this end, we also
investigate solutions in certain
asymptotic regimes (such as when there is little or no cost associated
with the control).  Nevertheless, the values of the key parameters $\gamma$
and $\Rnot$ for all examples are chosen to fall in the range of current estimates for
COVID-19.

One novel aspect of this work is that the problem is posed in terms of the
infinite-time limit, but formulated in a way that only requires solution
over a finite time interval.  Indeed, without this reformulation we found
that the problem was extremely ill-conditioned; this reformulation is also
needed in order to compute approximate solutions via a Hamilton-Jacobi-Belmman
equation.
This reformulation is presented in Section \ref{sec:prelims}.  The main theoretical
result is an exact characterization of the optimal control when $L=0$, given
as Theorem \ref{thm:no-cost} in Section \ref{sec:analytic}.  

Typical results in the literature on control of compartmental epidemiological models
are numerical and are based on Pontryagin's weak maximum principle, which gives only
necessary conditions for optimality.  At best, uniqueness is shown for small times; see e.g. 
\cite{kirschner1997optimal,fister1998optimizing,jung2002optimal,yan2008optimal,kar2011stability,sharomi2017optimal}.
In contrast, here the main result includes a proof of optimality for arbitrarily large times.
In Section \ref{sec:exploration} we explore
the behavior of optimal solutions for $L\ne 0$ under various interesting
cost functions and parameter regimes.  Here the results are based on
solutions of the relevant Hamilton-Jacobi-Bellman equation, which is
both necessary and sufficient for optimality.
In Section \ref{sec:application} we consider direct application to the COVID-19
pandemic.  Some conclusions are drawn in Section \ref{sec:conclusion}.

The code for all examples in this paper is publicly available \cite{ketcheson2021SIRRepro}.

\section{Formulation over a finite time interval\label{sec:prelims}}
In this section we reformulate the control problem \eqref{eq:inf-time-problem} in terms of
the solution over a finite time interval $[0,T]$.  This reformulation is
necessary both to facilitate the exact solution in Section \ref{sec:analytic}
and to arrive at a numerically-tractable problem for computing
approximate solutions, as described in Section \ref{sec:exploration}.


In general, the solution of \eqref{SIRq} depends on the initial data
$(x_0,y_0)$, the control $\sigma(t)$, and time $t$, so it is natural to write
$x(t;\sigma(t),x_0,y_0)$.
In what follows it will be convenient to make a slight abuse of notation and
write $x(t;\sigma(t))$ or $x(t)$ when there is no chance of confusion.

For a fixed reproduction number, the asymptotic susceptible fraction $\Sinf$
can be obtained from the solution $x(t), y(t)$ at any time $t$, since solutions
of \eqref{SIR} move along contours of $\Sinf$.  Thus we will write $\Sinf(x,y)$
or $\Sinf(x,y,\Rnot)$.

\subsection{A formula for $\Sinf$}
In this subsection we review the solution of the SIR model without
control \eqref{SIR}.
It can be shown that $x(t)$ satisfies (see \cite{harko2014exact,pakes2015lambert} and
\cite[pp.707-708]{kermack1927contribution})
$$
    x(t)e^{\Rnot z(t)} = x_0 e^{\Rnot z_0}.
$$
Since $z=1-x-y$ we define
$$
   \mu(x,y,\Rnot) := x(t) e^{-\Rnot(x(t)+y(t))},
$$
which is constant in time for any solution of \eqref{SIR}.
The trajectories in Figure \ref{fig:dynamics} are thus also contours of $\mu$.
Since $y_\infty=0$, we have
$$
    x_\infty = x_0 e^{\Rnot(x_\infty-x_0-y_0)} = \mu(x_0,y_0,\Rnot) e^{\Rnot x_\infty}.
$$
Setting $w=-x_\infty \Rnot$ we have
$$
    we^w = -x_0 \Rnot e^{-\Rnot(x_0+y_0)} = -\mu \Rnot.
$$
Thus $w = W_0(-\mu\Rnot)$ where $W_0$ is the principal branch of Lambert's $W$-function \cite{pakes2015lambert},
and
\begin{align} \label{eq:xinf}
    x_\infty(x,y,\sigma_0) = -\frac{1}{\Rnot}W_0(-\mu(x,y,\sigma_0) \Rnot).
\end{align}
Formula \eqref{eq:xinf} allows us to rewrite the problem \eqref{eq:inf-time-problem} in
terms of the state at time $T<\infty$:
\begin{align} \label{eq:basic-problem}
\begin{aligned}
& \text{Given } (x_0, y_0) \in \dom, \sigma_0>0, T>0,  \\
& \text{ choose an admissible control } \sigma(t): [0,T] \to [0,\Rnot] \text{ to minimize }  \\
&     J = -x_\infty(x(T),y(T),\sigma_0) + \int_0^T L(x(t),y(t),\sigma(t)) dt \\
& \text{ subject to } \eqref{SIRq}.
\end{aligned}
\end{align}

In what follows we will also require the derivatives of $\Sinf$ with respect to $x$, $y$, and $\mu$.
Direct computation gives
\begin{subequations} \label{xinf-grad}
\begin{align}
    \frac{\partial \Sinf}{\partial y(t)} & = -\frac{\Rnot \Sinf}{1-\Rnot \Sinf} \\
    \frac{\partial \Sinf}{\partial x(t)} & = \left(1-\frac{1}{x(t)\Rnot}\right) \frac{\partial \Sinf}{\partial y(t)}
      = \frac{1-\Rnot x(t)}{1-\Rnot \Sinf} \cdot \frac{\Sinf}{x(t)} \\
    \frac{\partial \Sinf}{\partial \mu} & = \frac{e^{\Rnot\Sinf}}{1-\Rnot \Sinf}.
\end{align}
\end{subequations}
Using these expressions we can also compute the rate of change of $\Sinf$ when some
control $\sigma(t)$ is applied:
\begin{align} \label{dxinf-dt}
    \frac{\partial \Sinf}{\partial t} = \frac{\gamma y \Sinf}{1-\Rnot\Sinf}(\Rnot - \sigma(t)).
\end{align}
From this we see that the impact of an intervention on $\Sinf$ is independent of $x(t)$ and
directly proportional to $y(t)$.  This indicates that intervention is more impactful
when there is a larger infected population.
%
%


\subsection{Bounds on $\Sinf$}
Now we turn our attention to the SIR system with control \eqref{SIRq}.
Henceforth we assume that $\sigma(t)\in[0,\sigma_0]$ for almost every $t\in[0,T]$;
we say that such a control is \emph{admissible}.


It is straightforward to show that \eqref{SIRq} has a unique solution for all time
for any initial data in $\dom$ and any admissible control, by the same
arguments used for \eqref{SIR}.
The proof of the next lemma shows that applying any control $\sigma(t)<\Rnot$ over
any length of time leads to an increase in $x_\infty$.
\begin{lem}
Let $\Rnot>0$ and $(x_0,y_0)\in\dom$ be given. 
Let $\sigma(t)$ be an admissible control.  Then for $t\ge0$ we have
$$
    x_\infty(x(t;\sigma(t)),y(t;\sigma(t)),\Rnot) \ge x_\infty(x_0,y_0,\Rnot).
$$
\end{lem}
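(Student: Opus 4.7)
The plan is to observe that the lemma follows almost immediately from the expression for $\partial \Sinf/\partial t$ derived in \eqref{dxinf-dt}, together with the admissibility constraint $\sigma(t)\in[0,\sigma_0]$. Specifically, I would show that under any admissible control the quantity $\Sinf(x(t),y(t),\Rnot)$ is a monotone non-decreasing function of $t$, so that its value at time $t$ is at least its value at time $0$.

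First, I would recall that $\Sinf(x(t),y(t),\Rnot)$ represents the asymptotic susceptible fraction obtained by integrating the uncontrolled system \eqref{SIR} (with reproduction number $\Rnot$) forward from the instantaneous state $(x(t),y(t))$. This interpretation, together with the standard fact noted earlier in the paper that $\Sinf \le 1/\Rnot$ for any trajectory of \eqref{SIR}, gives $\Sinf(x(t),y(t),\Rnot) \le 1/\Rnot$ whenever $y(t)>0$, so the denominator $1-\Rnot \Sinf$ appearing in \eqref{dxinf-dt} is strictly positive along the controlled trajectory.

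Next I would apply formula \eqref{dxinf-dt} directly. Each factor on the right-hand side is non-negative: $\gamma>0$ by assumption, $y(t)\ge 0$ because $\dom$ is forward-invariant for \eqref{SIRq}, $\Sinf\ge 0$ and $1-\Rnot\Sinf>0$ by the previous step, and $\Rnot-\sigma(t)\ge 0$ for almost every $t$ because $\sigma(t)\in[0,\Rnot]$ by admissibility. Hence
\begin{equation*}
    \frac{\partial}{\partial s}\Sinf\!\left(x(s),y(s),\Rnot\right) \ge 0 \qquad \text{for a.e. } s\in[0,t].
\end{equation*}
Integrating from $0$ to $t$ yields the desired inequality.

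I don't expect any serious obstacle here; the work is really in the identity \eqref{dxinf-dt}, which is already established. The only point requiring care is ensuring that \eqref{dxinf-dt} remains valid (in particular that $1-\Rnot\Sinf$ is nonzero) along the controlled trajectory, which is handled by the observation above that $\Sinf$ is the long-time limit of an uncontrolled SIR solution and hence bounded above by $1/\Rnot$.
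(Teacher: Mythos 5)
Your proof is correct but takes a genuinely different route from the paper's. The paper argues in the phase plane: dividing the two state equations gives $dy/dx = -1 + 1/(\sigma(t) x)$, so lowering $\sigma$ steepens the trajectory, which therefore lies (weakly) below the $\sigma_0$-trajectory through $(x_0,y_0)$, i.e.\ below the contour of $\Sinf$ through the initial point; since $\Sinf$ is a decreasing function of $y$ at fixed $x$, the inequality follows. You instead differentiate $\Sinf(x(t),y(t),\Rnot)$ along the controlled flow and read off the sign from \eqref{dxinf-dt}. Your version is arguably tighter: it replaces an informal comparison-of-trajectories argument with a one-line sign check of an identity the paper has already established, and it yields the slightly stronger conclusion that $t\mapsto\Sinf(x(t),y(t),\Rnot)$ is monotone non-decreasing. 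The paper's version needs no formula for the time derivative of $\Sinf$ and makes the geometric mechanism visible. One small point to tighten in yours: $\Sinf\le 1/\Rnot$ only gives $1-\Rnot\Sinf\ge 0$, not strict positivity. For the denominator in \eqref{dxinf-dt} to be strictly positive you should note that $x_\infty=1/\Rnot$ is attained only at the equilibrium $(1/\Rnot,0)$, and since the $y=0$ axis cannot be reached in finite time from $y_0>0$, one has $\Sinf(x(t),y(t),\Rnot)<1/\Rnot$ strictly along the trajectory; the paper makes exactly this observation just after \eqref{q-shortterm}, so the fix is immediate.
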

\begin{proof}
    Dividing \eqref{eq:y} by \eqref{eq:x} gives
    \begin{align} \label{eq:dydx}
        \frac{dy}{dx} = -1 + \frac{1}{\sigma(t) x}.
    \end{align}
    Thus reducing $\sigma(t)$ has the effect of increasing $dy/dx$.
    Since all trajectories flow to the left ($x$ is a decreasing function of $t$),
    this means that the solution trajectory obtained with $\sigma(t)$ lies
    below that obtained with $\sigma_0$, for all $t>0$.  Since
    $\Sinf$ is a decreasing function of $y$, this completes the proof.
\end{proof}

Thus for any admissible control and any initial data we have
$$
\Sinf(x_0,y_0,\Rnot) \le \Sinf(x(T),y(T),\Rnot) \le 1/\Rnot.
$$

\subsection{Existence and necessary conditions for an optimal control\label{sec:pmp}}
Let us define the Hamiltonian
\begin{align} \label{eq:ham}
    H(x(t),y(t), \sigma(t), \lambda_{1,2}(t), t) = -\lambda_1(t) \gamma \sigma(t) y(t) x(t) + \lambda_2(t)\gamma y(t)(\sigma(t) x(t) - 1) + L(x(t),y(t),\sigma(t)),
\end{align}
and the adjoint variables $\lambda_1(t), \lambda_2(t)$, which are required to satisfy
\begin{subequations}\label{lambda-odes}
\begin{align} 
    \lambda_1'(t) & = -\frac{\partial H}{\partial x} = (\lambda_1-\lambda_2)\gamma\sigma(t) y(t) - \frac{\partial L}{\partial x} \\
    \lambda_2'(t) & = -\frac{\partial H}{\partial y} = (\lambda_1-\lambda_2)\gamma\sigma(t) x(t) + \lambda_2 \gamma - \frac{\partial L}{\partial y} \\
    \lambda_1(T) & = -\frac{\partial \Sinf(T)}{\partial x} =\frac{\partial }{\partial x(T)} (-x_\infty(x(T),y(T),\Rnot) = \left(1-\frac{1}{x(T)\Rnot}\right)\lambda_2(T) \label{bc1} \\
    \lambda_2(T) & = -\frac{\partial \Sinf(T)}{\partial y} = \frac{\partial }{\partial y(T)} (-x_\infty(x(T),y(T),\Rnot), \label{bc2}
\end{align}
\end{subequations}
where $x(t), y(t)$ satisfy $\eqref{SIRq}$.
Note that the final conditions for $\lambda_{1,2}$ can be computed from \eqref{xinf-grad}.
We have the following result.

\begin{thm}
    Let $(x_0, y_0)\in\dom$ and $\Rnot, \gamma, T\ge 0$ be given.  
    Let the running cost $L$ be given such that it is convex with respect to $q$,
    bounded uniformly by a constant for all $(x,y)\in \dom$, $\sigma\in[0,\sigma_0]$,
    and continuously differentiable with respect to $x$ and $y$.
    Then there
    exists an admissible control $\sigma^*(t)$ for \eqref{eq:basic-problem} and corresponding
    response $(x^*(t),y^*(t))$ such that $J$ is minimized over the set of
    admissible controls.  Furthermore, there exist adjoint functions
    $\lambda_{1,2}(t)$ satisfying \eqref{lambda-odes} for almost all
    $t\in[0,T]$ with $x(t)=x^*(t), y(t)=y^*(t)$, and
    such that the Hamiltonian is minimized pointwise with respect to $\sigma$:
    \begin{align} \label{pointwise-min}
        H(x^*(t),y^*(t),\sigma^*(t),\lambda_{1,2}(t),t) = \inf_{\sigma\in[0,\sigma_0]} H(x,y,\sigma,\lambda_{1,2},t)
    \end{align}
    for almost all $t \in[0,T]$.
\end{thm}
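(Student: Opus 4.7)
The plan is to split the statement into two parts and invoke a standard result from optimal control theory for each. Existence of a minimizer follows from a Filippov--Cesari-type existence theorem applied to the Bolza problem \eqref{eq:basic-problem}, and the necessary conditions follow from Pontryagin's minimum principle for a fixed-horizon problem with free terminal state.

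For existence, I would verify the usual hypotheses (as in Cesari, or Fleming and Rishel). The domain $\dom$ is forward-invariant under \eqref{SIRq} for every measurable admissible $\sigma$: $x$ and $y$ remain nonnegative because the vector field is tangent to each coordinate axis, and $(x+y)' = -\gamma y \le 0$ preserves $x+y\le 1$. Trajectories are therefore uniformly bounded on $[0,T]$. The control set $[0,\Rnot]$ is compact; the dynamics are polynomial in $(x,y)$ and affine in $\sigma$; $L$ is bounded and convex in $\sigma$ (equivalently in $q$, since $q$ is affine in $\sigma$); and the terminal cost $\phi(x,y):= -\Sinf(x,y,\Rnot)$ is continuous on $\dom$ by \eqref{eq:xinf} and continuity of $W_0$. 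The Filippov convexity condition, that
\[
\{(f_1(x,y,\sigma),\,f_2(x,y,\sigma),\,L(x,y,\sigma)+r):\ \sigma\in[0,\Rnot],\ r\ge 0\}
\]
be convex for each $(x,y)$, holds for $xy>0$ because one may eliminate $\sigma$ to write the set as the epigraph of a convex function of $f_1$ subject to a single affine relation between $f_1$ and $f_2$; it is trivial when $xy=0$. These facts together yield an admissible optimal triple $(\sigma^*,x^*,y^*)$.

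The necessary conditions are then obtained by applying Pontryagin's minimum principle to this minimizer. The Hamiltonian \eqref{eq:ham} is exactly $\langle\lambda,f\rangle + L$, so the adjoint ODEs $\lambda' = -\nabla_{(x,y)}H$ reproduce \eqref{lambda-odes}(a),(b) by direct differentiation. With $T$ fixed and the terminal state free, the transversality condition reads $\lambda(T) = \nabla_{(x(T),y(T))}\phi$; substituting the formulas \eqref{xinf-grad} reproduces \eqref{bc1} and \eqref{bc2}. The pointwise minimum property \eqref{pointwise-min} is the defining conclusion of the principle. Applicability requires $C^1$ regularity of $f$, $L$, and $\phi$ in $(x,y)$: the first is polynomial, the second is assumed, and the third reduces to checking that $W_0(-\mu\Rnot)$ is $C^1$ at the terminal state.

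The main obstacle is this last smoothness check, since \eqref{xinf-grad} carries a factor $1/(1-\Rnot\Sinf)$ that diverges precisely at the branch point $-1/e$ of $W_0$, namely at $y=0,\ x=1/\Rnot$. I would handle this by observing that any solution of \eqref{SIRq} with $y_0>0$ satisfies $y(t)>0$ for all $t\in[0,T]$: the logarithmic derivative $y'/y = \gamma(\sigma x - 1)$ is uniformly bounded on $\dom\times[0,\Rnot]$, so $y(T)\ge y_0\, e^{-\gamma(\Rnot+1)T}>0$. Consequently $(x(T),y(T))$ lies strictly inside $\{-\mu\Rnot > -1/e\}$, where $W_0$ is real-analytic, and $\phi$ is $C^1$ on a neighborhood of the terminal state for every admissible trajectory. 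Once this is in hand, both conclusions follow from the cited off-the-shelf theorems, and the remaining work is the routine identification of the gradients with the formulas displayed in \eqref{lambda-odes}.
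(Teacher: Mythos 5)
Your proposal is correct and follows essentially the same route as the paper: the paper likewise obtains existence from an off-the-shelf theorem for Bolza problems with a compact control set and convex (in the control) running cost, and then derives the adjoint system, transversality conditions, and pointwise minimization from Pontryagin's weak maximum principle. The one place you go beyond the paper is the explicit check that the terminal cost $-\Sinf$ is $C^1$ at $(x(T),y(T))$ despite the singularity of \eqref{xinf-grad} at the branch point $x=1/\Rnot$, $y=0$ of $W_0$; the paper simply asserts this differentiability, so your observation that $y(T)\ge y_0 e^{-\gamma T}>0$ keeps the terminal state away from the branch point is a worthwhile addition rather than a departure.
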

\begin{proof}
    The existence of an optimal control is guaranteed by \cite[Theorem 23.11]{clarke2013functional}
    since $L$ is convex with respect to $q(t)=1-\sigma(t)/\Rnot$, the state solutions $(x(t),y(t))$
    and their derivatives in time
    are bounded, the system \eqref{SIRq} is Lipschitz with respect to $x, y$, and the control $\sigma(t)=\sigma_0$
    is admissible and leads to a finite cost.
    The second part of the Theorem follows from applying Pontryagin's weak
    maximum principle as stated e.g. in \cite[Theorem 22.2]{clarke2013functional},
    which applies due to the assumptions on $L$ and since $\Sinf$ is continuously differentiable with respect to $x$ and $y$.
\end{proof}
    Observe that condition \eqref{pointwise-min} implies that
    the optimal control $\sigma^*(t)$ satisfies the optimality condition
    \begin{align} \label{eq:sigma-c2}
        \sigma^*(t) = \max\left(0,\min\left(\sigma_0,\hat{\sigma}(t)\right)\right),
    \end{align}
    where
    \begin{align} \label{eq:dldsig}
        \left. \frac{\partial L}{\partial \sigma}\right|_{\sigma(t)=\hat{\sigma}(t)} = - (\lambda_2(t)-\lambda_1(t))\gamma y x.
    \end{align}

\subsection{Infinite-time control}
In this section only, we consider controls that reach the optimal value $\Sinf = 1/\Rnot$.
This is achieved only at $(x,y)=(1/\Rnot,0)$, a state that cannot be reached
from any other state without imposing some control, and which in any case can
only be reached after an infinite time.  Thus we momentarily set aside the restriction
\eqref{q-shortterm} and consider controls extending up to an arbitrarily large time $T$.
We still require that the system approach a stable equilibrium point as $t\to\infty$.
We assume that $x_0\ge1/\Rnot$, since otherwise the maximum achievable value of $\Sinf$
is $x_0$, which would be achieved by taking simply $\sigma(t)=0$ for all $t$.
We also take $L=0$ so that an optimal control is any control satisfying
$$
    \lim_{t \to \infty} x(t,\sigma(t)) = 1/\Rnot.
$$
There are infinitely many such controls.  Two are particularly simple and
are of interest.

The first is a constant control $\sigma(t) = \sigma_*(x_0, y_0, \Rnot)$.
By \eqref{eq:xinf} we must have $\Sinf(x_0,y_0,\sigma_*)=1/\Rnot$, so $\sigma_*$ is the solution of
$$
    W_0(-\mu(x_0,y_0,\sigma_*)\sigma_*) = -\frac{\sigma_*}{\sigma_0}.
$$

The second is a bang-bang control in which
$$
    \sigma(t) = \begin{cases} \Rnot & x>1/\Rnot \\ 0 & x=\Rnot. \end{cases}
$$
The response for each of these controls is shown for a specific example in
Figure \ref{fig:two-controls}.
\begin{figure}
    \centering
    \includegraphics[width=0.5\textwidth]{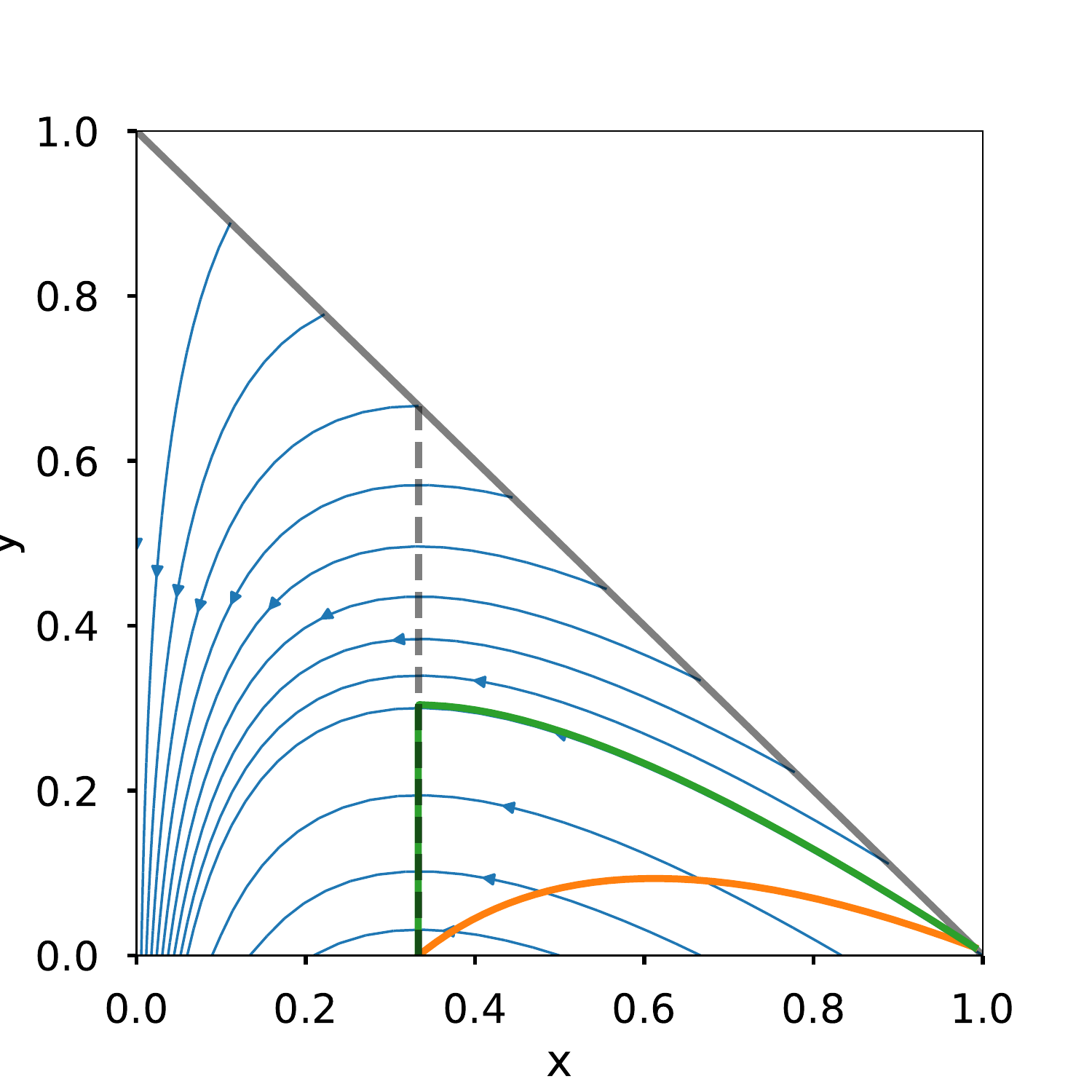}
    \caption{Two infinite-time controls that give $\Sinf=1/\Rnot$.  Here $\Rnot=3$ and
        $(x_0,y_0)=(0.99,0.01)$.  For the constant control, $\sigma(t)=\sigma_*\approx(1-0.4557)\Rnot$.\label{fig:two-controls}}
\end{figure}

\section{Optimal control with $L=0$\label{sec:analytic}}
In this section we derive the exact solution of the control problem
\eqref{eq:basic-problem} with $L=0$ (i.e., when the goal of increasing $x_\infty$ completely
trumps any associated costs or other concerns).  Then \eqref{eq:basic-problem} becomes
\begin{align} \label{eq:no-cost-problem}
\begin{aligned}
& \text{Given } (x_0, y_0)\in \dom, \sigma_0>0, T>0, \\
& \text{ choose an admissible control } \sigma(t): [0,T] \to [0,\Rnot] \\
& \text{ to minimize }  J = -x_\infty(x(T),y(T),\sigma_0) \\
& \text{ subject to } \eqref{SIRq}.
\end{aligned}
\end{align}
This problem can be reformulated as a minimum-time control problem.  

\begin{lem} \label{lem:min-time}
Let $\sigma^*(t)$ be an optimal control for \eqref{eq:no-cost-problem}, and
let $(x^*(T),y^*(T))$ denote the corresponding terminal state.
Then there is no admissible control that reaches $(x^*(T),y^*(T))$ from $(x_0,y_0)$
before time $T$.
\end{lem}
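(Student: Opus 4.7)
The plan is to argue by contradiction, exploiting formula \eqref{dxinf-dt}: applying any control strictly below $\Rnot$ at a state with $y>0$ strictly increases $\Sinf$ along the flow. Suppose, to the contrary, that there is an admissible control $\tilde\sigma:[0,\tilde T]\to[0,\Rnot]$ with $\tilde T<T$ whose response satisfies $(x(\tilde T;\tilde\sigma),y(\tilde T;\tilde\sigma))=(x^*(T),y^*(T))$. I would then construct a competitor $\bar\sigma:[0,T]\to[0,\Rnot]$ for the original problem by setting
\[
\bar\sigma(t)=\begin{cases}\tilde\sigma(t), & t\in[0,\tilde T],\\ 0, & t\in(\tilde T,T],\end{cases}
\]
which is admissible because $0\in[0,\Rnot]$, and then show that $J(\bar\sigma)<J(\sigma^*)$, contradicting the optimality of $\sigma^*$.

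To carry out the comparison, observe that on $[\tilde T,T]$ the system \eqref{SIRq} with $\bar\sigma\equiv 0$ reduces to $x'(t)=0$ and $y'(t)=-\gamma y(t)$, so the trajectory stays in $\dom$ with $y>0$ throughout, and the new terminal state is $(\bar x(T),\bar y(T))=\bigl(x^*(T),\,y^*(T)e^{-\gamma(T-\tilde T)}\bigr)$. Integrating \eqref{dxinf-dt} over $[\tilde T,T]$ with $\sigma\equiv 0$ yields
\[
\Sinf(\bar x(T),\bar y(T),\Rnot)-\Sinf(x^*(T),y^*(T),\Rnot)=\int_{\tilde T}^{T}\frac{\gamma\, y(t)\,\Sinf(x(t),y(t),\Rnot)\,\Rnot}{1-\Rnot\,\Sinf(x(t),y(t),\Rnot)}\,dt>0,
\]
where strict positivity uses $y(t)>0$ and $\Sinf(\cdot)<1/\Rnot$ throughout (the latter from the bounds of the preceding subsection). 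Since $L=0$ makes the objective $J(\sigma)=-\Sinf(x(T),y(T),\Rnot)$, this strict inequality reads $J(\bar\sigma)<J(\sigma^*)$, the sought contradiction.

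There is no real technical obstacle beyond ensuring that $y$ remains strictly positive on the extension interval; this is automatic because $y^*(T)>0$ (the $y=0$ axis cannot be reached in finite time) and the extension only decays $y$ exponentially. The lemma is thus essentially a direct consequence of the monotonicity relation \eqref{dxinf-dt}, and does not require any Pontryagin-type machinery.
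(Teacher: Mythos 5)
Your proposal is correct and follows essentially the same route as the paper: assume a faster control exists, concatenate it with $\sigma\equiv 0$ on the remaining time, and conclude that the objective strictly decreases, contradicting optimality. The only difference is that you explicitly justify the strict decrease by integrating \eqref{dxinf-dt} over the extension interval, a detail the paper leaves implicit.
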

\begin{proof}
Suppose there were a control $\hat{\sigma}(t)$ that leads to $(x(t^*),y(t^*)) = (x^*(T),y^*(T))$ for some $t^*<T$.  Then
we could obtain a smaller value of $J$ in \eqref{eq:no-cost-problem} by using
$\hat{\sigma}$ up to time $t^*$ combined with the choice $\sigma(t)=0$ for $t>t^*$.
This contradicts the optimality of $\sigma^*(t)$.
\end{proof}

Furthermore, the optimal control must be a bang-bang control.
\begin{lem} \label{lem:bang-bang}
Let $\sigma(t)$ be an optimal control for \eqref{eq:no-cost-problem}.
Then
\begin{align} \label{eq:switch}
    \sigma(t) & = \begin{cases} 0 & \lambda_1(t)<\lambda_2(t) \\ \sigma_0 & \lambda_1(t) > \lambda_2(t) \end{cases}
\end{align}
where $\lambda_{1,2}(t)$ are given by \eqref{lambda-odes}.
\end{lem}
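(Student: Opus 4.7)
The plan is to apply the pointwise minimization condition \eqref{pointwise-min} from the preceding theorem on Pontryagin's weak maximum principle. Setting $L = 0$ in the Hamiltonian \eqref{eq:ham} and collecting terms in $\sigma$ gives
$$
H = \gamma y(t) x(t)(\lambda_2(t)-\lambda_1(t)) \sigma(t) - \gamma y(t) \lambda_2(t),
$$
which is affine in $\sigma$. The second term does not depend on $\sigma$, so minimization over $\sigma\in[0,\sigma_0]$ is determined entirely by the sign of the coefficient $\gamma y(t) x(t) (\lambda_2(t)-\lambda_1(t))$.

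First I would observe that on any trajectory of \eqref{SIRq} starting in $\dom$, both $x(t)$ and $y(t)$ remain strictly positive for all $t\in[0,T]$: $x(t)$ is strictly decreasing and cannot hit zero in finite time (since $x'/x$ is bounded below), and similarly $y(t)>0$ by the same argument applied to \eqref{eq:y}. Hence $\gamma y(t) x(t) > 0$, and the sign of the coefficient of $\sigma$ equals the sign of $\lambda_2(t)-\lambda_1(t)$.

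Next I would read off the minimizer on the interval $[0,\sigma_0]$ of the affine function $\sigma \mapsto c\sigma$: if $c>0$ the minimum is at $\sigma=0$, and if $c<0$ the minimum is at $\sigma=\sigma_0$. Applied to our situation, $\lambda_2(t)>\lambda_1(t)$ forces $\sigma^*(t)=0$, while $\lambda_1(t)>\lambda_2(t)$ forces $\sigma^*(t)=\sigma_0$, which is exactly \eqref{eq:switch}. By \eqref{pointwise-min}, this holds for almost every $t\in[0,T]$.

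I do not anticipate any substantial obstacle: the argument is a direct instantiation of Pontryagin's principle in the linear-in-control case. The only subtle point is verifying $x(t), y(t) > 0$ to rule out a trivial sign degeneracy, and this follows immediately from the forward-invariance of $\dom$ already noted for the controlled system in Section 2.2. The statement is silent on the singular set $\{t : \lambda_1(t) = \lambda_2(t)\}$, which is consistent with the fact that the minimizer is non-unique there; a finer analysis of singular arcs is not required for the lemma as stated.
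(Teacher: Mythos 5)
Your derivation of \eqref{eq:switch} from the pointwise minimization condition \eqref{pointwise-min} is exactly the first half of the paper's proof: with $L=0$ the Hamiltonian is affine in $\sigma$ with coefficient $(\lambda_2-\lambda_1)\gamma y x$, positivity of $x$ and $y$ reduces everything to the sign of $\lambda_2-\lambda_1$, and the minimizer on $[0,\sigma_0]$ sits at the appropriate endpoint. That part is fine and matches the paper.

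The gap is your final paragraph, where you wave off the singular set $\{t:\lambda_1(t)=\lambda_2(t)\}$ as not requiring analysis. The paper devotes the second half of its proof to precisely this point ("It remains to show that there are no singular arcs"), and for good reason: the lemma is introduced with the claim that the optimal control \emph{must be bang-bang}, and this is how it is used in Theorem~\ref{thm:one-switch} via Lemma~\ref{lem:shortest-path}. If $\lambda_1=\lambda_2$ were allowed to hold on an open interval, \eqref{eq:switch} would say nothing there, the control could take interior values on a set of positive measure, and the downstream argument (which enumerates trajectories as concatenations of $\sigma=0$ and $\sigma=\sigma_0$ segments) would collapse. The paper closes this hole by supposing $\lambda_1=\lambda_2$ on an open interval, using the adjoint equations \eqref{lambda-odes} with $L=0$ to conclude the adjoint pair is then forced to coincide on all of $[0,T]$ (by uniqueness for the linear adjoint ODE), and deriving a contradiction with the terminal conditions \eqref{bc1}--\eqref{bc2}, since $\lambda_2(T)\ne 0$ and $1-\tfrac{1}{x(T)\Rnot}\ne 1$. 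You should add this step; without it your proof establishes the literal display \eqref{eq:switch} but not the bang-bang structure the lemma is meant to deliver.
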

\begin{proof}
From \eqref{eq:ham} with $L=0$, we have
$$
\frac{\partial H}{\partial \sigma} = (\lambda_2(t)-\lambda_1(t))\gamma y(t) x(t).
$$
The optimality condition then implies \eqref{eq:switch} except at points where $\partial H/\partial \sigma=0$
(see e.g. \cite[Ch. 17]{lenhart2007optimal}.

It remains to show that there are no singular arcs.
Since $x(t),y(t) >0$ for $t<\infty$, we have that $\partial H/\partial \sigma=0$
if and only if $\lambda_1=\lambda_2$.  Suppose (by way of contradiction) that
the latter condition holds on an open interval.  Then on that interval we would
have (by \eqref{lambda-odes} with $L=0$):
\begin{align}
\lambda_1'(t) & = (\lambda_1-\lambda_2)\gamma\sigma y = 0 \\
\lambda_2'(t) & = (\lambda_1-\lambda_2)\gamma\sigma x = 0.
\end{align}
By continuity, this would imply that $\lambda_1(t)=\lambda_2(t)$ over
the whole interval $[0,T]$, and in particular at time $T$.
But then \eqref{bc1}-\eqref{bc2} gives
$$
\left(1-\frac{1}{x(T)\Rnot}\right)\lambda_2(T) = \lambda_2(T).
$$
We know from \eqref{xinf-grad} that $\lambda_2(T)\ne 0$, so this
is a contradiction.
\end{proof}

This motivates the following lemma.

\begin{lem} \label{lem:shortest-path}
Let $(x_0,y_0)$ and $(x_1,y_1)$ be given such that $x_0, x_1 \ge 1/\Rnot$ and
$\Sinf(x_0,y_0,\Rnot)\ge\Sinf(x_1,y_1,\Rnot)$.
Let $\sigma(t)$ be a bang-bang control such that $(x(t_1;x_0,y_0,\sigma(t)),y(t_1;x_0,y_0,\sigma(t)))=(x_1,y_1)$
for some $t_1\ge0$.  Then the minimum value of $t_1$ is achieved by taking
\begin{align} \label{eq:one-switch}
    \sigma(t) & = \begin{cases}
        \Rnot & t<t^* \\
        0 & t^* \le  t \le t_1,
    \end{cases}
\end{align}
where $t^*$ satisfies $x(t^*;x_0,y_0,\Rnot)=x_1$.
\end{lem}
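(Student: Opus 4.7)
The plan is to write the total elapsed time $t_1$ as an explicit functional of the trajectory and then establish a lower bound that is attained by the one-switch control. First I would split $[0,t_1]$ into the ``on'' set (where $\sigma=\Rnot$) and the ``off'' set (where $\sigma=0$), with respective total lengths $T_{\text{on}}$ and $T_{\text{off}}$. Integrating $y'/y$ over $[0,t_1]$, using $y'=-\gamma y$ on the off-set and $y'=\gamma y(\Rnot x-1)$ on the on-set, gives
\[
\log(y_1/y_0) = -\gamma T_{\text{off}} + \gamma\int_{\text{on}}(\Rnot x(t)-1)\,dt,
\]
and therefore
\[
t_1 = T_{\text{on}} + T_{\text{off}} = \tfrac{1}{\gamma}\log(y_0/y_1) + \Rnot\int_{\text{on}}x(t)\,dt.
\]

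Second, since $x'=-\gamma\Rnot xy<0$ on the on-set and $x'=0$ on the off-set, $x(t)$ is monotonically non-increasing from $x_0$ to $x_1$, with strict decrease on each on-segment. Consequently the on-segments project bijectively (up to a finite collection of endpoints) onto $[x_1,x_0]$, and the value of $y$ on the on-segment passing through any $x\in[x_1,x_0]$ is a well-defined function $y(x)$. The substitution $dt=-dx/(\gamma\Rnot xy)$ then yields
\[
t_1 = \tfrac{1}{\gamma}\log(y_0/y_1) + \tfrac{1}{\gamma}\int_{x_1}^{x_0}\frac{dx}{y(x)}.
\]

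Third, I would establish the pointwise bound $y(x)\le y^*(x)$ for $x\in[x_1,x_0]$, where $y^*(x)$ is the uncontrolled SIR trajectory through $(x_0,y_0)$, i.e., $\mu(x,y^*(x))=\mu(x_0,y_0)$. The conserved quantity $\mu(x,y)=xe^{-\Rnot(x+y)}$ is constant on on-segments and, since $\partial\mu/\partial y<0$ and $y$ is decreasing, strictly increasing on off-segments. Hence $\mu(x(t),y(t))\ge\mu(x_0,y_0)$ along the whole trajectory, and monotonicity of $\mu$ in $y$ at fixed $x$ gives $y(x)\le y^*(x)$. It follows that
\[
t_1 \ge \tfrac{1}{\gamma}\log(y_0/y_1) + \tfrac{1}{\gamma}\int_{x_1}^{x_0}\frac{dx}{y^*(x)},
\]
and the right-hand side is exactly the duration of the one-switch control \eqref{eq:one-switch} (on from $(x_0,y_0)$ until $x=x_1$, then off until $y=y_1$), which is admissible since $y_1\le y^*(x_1)$ by reachability of $(x_1,y_1)$. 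Equality holds iff no off-phase is applied for $x>x_1$, yielding the claimed unique minimizer.

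The main obstacle I anticipate is justifying the change of variables in the second step: namely, spelling out rigorously that, across possibly many switches of the bang-bang control, the on-segments cover $[x_1,x_0]$ without $x$-overlap so that $y(x)$ is unambiguously defined and the substitution is legitimate. The other ingredients are either direct ODE bookkeeping or the pointwise comparison of $1/y(x)$ with $1/y^*(x)$ on a fixed interval, both of which are routine once the functional form for $t_1$ is in hand.
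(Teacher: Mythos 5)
Your proof is correct, and it is a genuinely different --- and more rigorous --- route than the paper's. The paper argues geometrically: a bang-bang trajectory alternates between horizontal travel along a contour of $\mu$ (at speed $\gamma\Rnot xy$) and vertical descent (at speed $\gamma y$), the total $x$-distance and total downward $y$-distance are the same for every trajectory joining the two states, both kinds of travel are faster at larger $y$, and the one-switch control performs all travel at the largest attainable $y$. Making that comparison precise requires a rearrangement-type argument that the paper does not spell out. Your identity
\begin{align*}
t_1 \;=\; \frac{1}{\gamma}\log\!\left(\frac{y_0}{y_1}\right) \;+\; \frac{1}{\gamma}\int_{x_1}^{x_0}\frac{dx}{y(x)}
\end{align*}
collapses all of the trajectory dependence into a single integral, isolates the endpoint-determined part exactly, and reduces the lemma to the pointwise bound $y(x)\le y^*(x)$, which follows cleanly from the monotonicity of $\mu$ along admissible trajectories (the same mechanism as the paper's Lemma~1). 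This buys you a fully quantitative proof, an explicit formula for the minimal time, and the characterization of the equality case, none of which the paper's sketch delivers. Two small remarks: the change of variables does require the observation you flag (the on-set projects monotonically onto $[x_1,x_0]$ because $x$ is nonincreasing and strictly decreasing only there --- this is easy but should be stated); and note that you correctly invoke reachability to get $y_1\le y^*(x_1)$, quietly repairing the lemma's hypothesis $\Sinf(x_0,y_0,\Rnot)\ge\Sinf(x_1,y_1,\Rnot)$, whose inequality appears to be written in the wrong direction, since reachability forces $\mu(x_1,y_1)\ge\mu(x_0,y_0)$.
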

\begin{proof}
Since $\sigma(t)$ is a bang-bang control, the trajectory $(x(t;\sigma(t)),y(t;\sigma(t)))$ 
consists of a sequence of segments each of which is a solution of
\eqref{SIRq} with $\sigma=0$ (traveling directly downward) or with $\sigma=\sigma_0$
(traveling along a contour of $x_\infty$).  Some trajectories of this type are
illustrated in Figure \ref{fig:bangbangtraj}.  Notice that each trajectory must
traverse the same distance in the x-direction; since $x'(t)=-\beta xy$ this
travel is faster at larger $y$ values.  Meanwhile, the total length of all the
downward ($\sigma=0$) segments is the same for any trajectory, and since for
these segments $y'(t) = -\gamma y$, travel is again faster at larger $y$ values.
The control given in the lemma makes all these traversals at the largest
possible values of $y$, so it arrives in the shortest time.
\end{proof}

Combining these three lemmas, we obtain the following.
\begin{thm} \label{thm:one-switch}
    Any optimal control for \eqref{eq:no-cost-problem} is of the form \eqref{eq:one-switch}
    with $t_1=T$.
\end{thm}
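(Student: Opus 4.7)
The plan is to combine Lemmas \ref{lem:min-time}, \ref{lem:bang-bang}, and \ref{lem:shortest-path}. Let $\sigma^*$ be any optimal control for \eqref{eq:no-cost-problem} with response $(x^*(t),y^*(t))$ and terminal state $(x^*_T,y^*_T):=(x^*(T),y^*(T))$. By Lemma \ref{lem:bang-bang}, $\sigma^*$ is bang-bang, taking values in $\{0,\sigma_0\}$ almost everywhere. By Lemma \ref{lem:min-time}, no admissible control transports $(x_0,y_0)$ to $(x^*_T,y^*_T)$ in time less than $T$, so $\sigma^*$ itself realizes the minimum transit time across bang-bang controls joining these two endpoints, and that minimum time is exactly $T$.

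Next I would apply Lemma \ref{lem:shortest-path} with $(x_1,y_1)=(x^*_T,y^*_T)$: among bang-bang controls driving $(x_0,y_0)$ to this target, the one-switch control \eqref{eq:one-switch} is the unique minimum-time choice, with $t^*$ the time at which the uncontrolled SIR trajectory from $(x_0,y_0)$ first has $x$-coordinate $x^*_T$. Combining with the previous step forces $t_1=T$ and identifies $\sigma^*$ almost everywhere with this one-switch control, which is the statement of the theorem.

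The main obstacle lies in confirming that the hypotheses of Lemma \ref{lem:shortest-path} actually apply to the endpoint pair $(x_0,y_0)$ and $(x^*_T,y^*_T)$: one needs $x_0,x^*_T \ge 1/\Rnot$ and the appropriate comparison between $\Sinf(x_0,y_0,\Rnot)$ and $\Sinf(x^*_T,y^*_T,\Rnot)$. The nontrivial case is $x_0 \ge 1/\Rnot$, since otherwise $\sigma\equiv 0$ already drives $\Sinf$ arbitrarily close to $x_0$ (its only possible asymptotic value). For the terminal state one argues by contradiction that an optimal trajectory with $x^*_T<1/\Rnot$ could be strictly improved by switching off intervention earlier, before the trajectory crossed the herd-immunity threshold. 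The monotonicity identity \eqref{dxinf-dt} shows that $\Sinf$ is nondecreasing along every admissible trajectory, which supplies the needed $\Sinf$-comparison. A secondary point is the uniqueness — rather than just existence — of the minimum-time bang-bang control in Lemma \ref{lem:shortest-path}; this is implicit in that lemma's proof, since any schedule with extra switches forces some traversals to occur at strictly smaller $y$ values and therefore takes strictly longer.
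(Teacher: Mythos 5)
Your proposal is correct and follows essentially the same route as the paper: the paper's proof is exactly the two-line combination of Lemmas \ref{lem:min-time}, \ref{lem:bang-bang}, and \ref{lem:shortest-path}. Your additional care in checking the hypotheses of Lemma \ref{lem:shortest-path} (that $x_0, x^*(T)\ge 1/\Rnot$ and the $\Sinf$ comparison via \eqref{dxinf-dt}) and in noting the uniqueness of the minimum-time control goes beyond what the paper writes down, but it is a refinement of the same argument rather than a different one.
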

\begin{proof}
    By Lemmas \ref{lem:min-time} and \ref{lem:bang-bang}, the optimal control must be bang-bang and must solve
    the optimal-time problem.  Then Lemma \ref{lem:shortest-path} applies and gives the stated result.
\end{proof}

\begin{figure}
    \centering
    \includegraphics[width=0.5\textwidth]{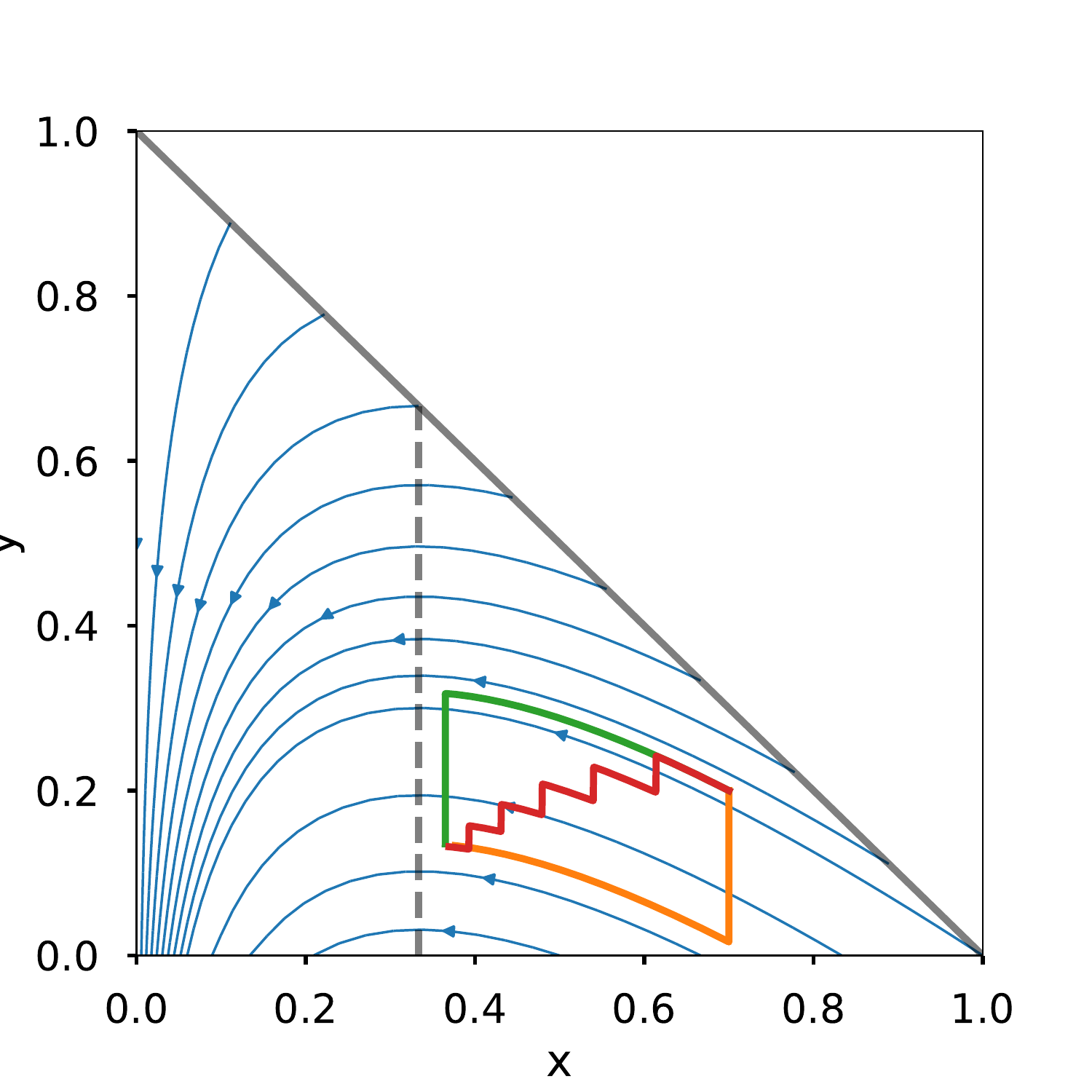}
    \caption{Three different paths between two states, each obtained
    with a bang-bang control.  The top (green) path arrives in the shortest time.\label{fig:bangbangtraj}}
\end{figure}
We can now give the solution of \eqref{eq:no-cost-problem}.
\begin{thm} \label{thm:no-cost}
The optimal control for \eqref{eq:no-cost-problem} is unique and is given by
\begin{align}
    \sigma(t) & = \begin{cases}  
        \Rnot & t<t^* \\
        0 & t^* \le  t \le T,
    \end{cases}
\end{align}
where 
\begin{align} \label{eq:dropcond}
    t^*=0 \text{ if } x_0\le\frac{1}{\sigma_0(1-e^{-\gamma T})},
\end{align}
and otherwise $t^*$ is the unique solution of
\begin{align} \label{xtstar}
    x(t^*;\Rnot,x_0,y_0) = \frac{1}{\sigma_0(1-e^{-\gamma(T-t^*)})}.
\end{align}
\end{thm}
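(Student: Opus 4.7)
My proof proposal: by Theorem \ref{thm:one-switch}, every optimal control for \eqref{eq:no-cost-problem} has the single-switch form, with $\sigma\equiv\sigma_0$ on $[0,t^*)$ and $\sigma\equiv 0$ on $[t^*,T]$. So the remaining task is purely to find the optimal switching time $t^*\in[0,T]$, and to show this choice is unique. The plan is to reduce the problem to a one-dimensional maximization over $t^*$, differentiate, and use monotonicity to obtain \eqref{eq:dropcond}--\eqref{xtstar}.

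First I would evaluate the terminal state in terms of $t^*$. On $[t^*,T]$ the dynamics with $\sigma=0$ give $x'=0$ and $y'=-\gamma y$, so $x(T)=x(t^*)$ and $y(T)=y(t^*)e^{-\gamma(T-t^*)}$. Then from \eqref{eq:xinf} and the chain rule (or directly from $\partial x_\infty/\partial \mu = e^{\sigma_0 x_\infty}/(1-\sigma_0 x_\infty)>0$ derived from \eqref{xinf-grad}), maximizing $x_\infty(x(T),y(T),\sigma_0)$ is equivalent to maximizing $\mu(x(T),y(T),\sigma_0)=x(T)e^{-\sigma_0(x(T)+y(T))}$. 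Since $\mu$ is conserved on $[0,t^*]$ (where $\sigma=\sigma_0$), writing $\mu_0:=\mu(x_0,y_0,\sigma_0)$ we obtain
\begin{equation*}
\mu(x(T),y(T),\sigma_0) = \mu_0 \, \exp\!\bigl(\sigma_0\,y(t^*)(1-e^{-\gamma(T-t^*)})\bigr).
\end{equation*}
Thus the problem reduces to maximizing $g(t^*):=y(t^*)\bigl(1-e^{-\gamma(T-t^*)}\bigr)$ over $[0,T]$.

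Next I would compute $g'(t^*)$ using \eqref{eq:y} with $\sigma=\sigma_0$:
\begin{equation*}
g'(t^*) = \gamma\, y(t^*)\bigl[\sigma_0\,x(t^*)(1-e^{-\gamma(T-t^*)}) - 1\bigr].
\end{equation*}
Define $h(t):=\sigma_0\,x(t)(1-e^{-\gamma(T-t)})$. On $[0,T]$, the factor $x(t)$ is strictly decreasing (since $x'=-\gamma\sigma_0 xy<0$) and the factor $1-e^{-\gamma(T-t)}$ is also strictly decreasing, so $h$ is strictly decreasing; hence $g'$ changes sign at most once, from positive to negative.

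Two cases then give the theorem. If $\sigma_0 x_0(1-e^{-\gamma T})\le 1$, i.e.\ $x_0\le 1/[\sigma_0(1-e^{-\gamma T})]$, then $h(0)\le 1$ and by monotonicity $h(t^*)<1$ for all $t^*\in(0,T]$, so $g'\le 0$ throughout and the maximum is attained at $t^*=0$, giving \eqref{eq:dropcond}. Otherwise $h(0)>1$, and since $h(T)=0<1$, strict monotonicity of $h$ yields a unique $t^*\in(0,T)$ solving $h(t^*)=1$, which is precisely \eqref{xtstar}; at this $t^*$, $g'$ changes from positive to negative so $t^*$ is the global maximizer on $[0,T]$. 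Uniqueness of the optimal control follows from uniqueness of the maximizer together with Theorem \ref{thm:one-switch}. The main obstacle is simply the bookkeeping in step one — reducing the objective to $g(t^*)$ via the conservation law for $\mu$; once that is in hand, the rest is a monotonicity argument on $h$.
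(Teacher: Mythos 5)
Your proposal is correct and follows essentially the same route as the paper: reduce to a single-switch control via Theorem \ref{thm:one-switch}, use the monotone dependence of $x_\infty$ on $\mu$ to turn the objective into the one-dimensional maximization of $y(t^*)\bigl(1-e^{-\gamma(T-t^*)}\bigr)$, and locate the critical point. Your sign analysis via the strictly decreasing function $h(t)=\sigma_0 x(t)\bigl(1-e^{-\gamma(T-t)}\bigr)$ is in fact slightly cleaner than the paper's appeal to a second-derivative check, and it absorbs the case $x_0\le 1/\sigma_0$ (which the paper treats separately) into the general argument.
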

\begin{proof}
First, suppose $x(0)\le1/\sigma_0$.  The claimed optimal control gives $x(T)=x_0$, whereas
any other control will give $x(T)<x_0$.  Similarly, we see from \eqref{SIRq} that
the optimal control gives $y(T)=e^{-\gamma T}y_0$ and any other control will
lead to a larger value of $y(T)$.  Since $\Sinf$ is a decreasing function of $y$ and
(for $x<1/\Rnot$) an increasing function of $x$, the proposed control is optimal in this case.

Now suppose $x(0)>1/\sigma_0$.  We reformulate the objective as follows.
From \eqref{xinf-grad} we see that $\Sinf$ is a strictly monotone increasing function of $\mu$,
so that maximizing $\Sinf$ is equivalent to maximizing $\mu$. Now
\begin{align*}
    \mu'(t) & = (x'(t)-\Rnot x(t)(x'(t)+y'(t)))e^{-\Rnot(x(t)+y(t))} \\
            & = (\Rnot - \sigma(t))\gamma x(t) y(t) e^{-\Rnot(x(t)+y(t))} \\
            & = \gamma y(t) (\Rnot-\sigma(t))\mu(t).
\end{align*}
Thus
$$
    \mu(t) = \exp\left(\gamma \int_0^t y(\tau) (\Rnot-\sigma(\tau)) d\tau\right) \mu(0).
$$
Thus, maximizing $\Sinf(T)$ is equivalent to maximizing
$$
    I := \int_0^T y(\tau) (\Rnot-\sigma(\tau)) d\tau.
$$
From Theorem \ref{thm:one-switch} we have that
\begin{align*}
    I & = \int_{t^*}^T y(\tau) \Rnot d\tau \\
      & = \frac{\Rnot}{\gamma} y(t^*) \left(1-e^{-\gamma(T-t^*)}\right).
\end{align*}
Differentiating with respect to $t^*$ gives
\begin{align} \label{eq:didt}
    \frac{d I}{dt^*} & = \Rnot y(t^*)\left( \Rnot x(t^*)(1-e^{-\gamma(T-t^*)}) - 1 \right).
\end{align}
If the inequality in \eqref{eq:dropcond} is satisfied then this has no
zero and $I$ is maximized by taking $t^*=0$.  If the condition in
\eqref{eq:dropcond} is not satisfied, then setting the right hand side
of \eqref{eq:didt} equal to zero yields the condition \eqref{xtstar}.
By checking the second derivative, it is easily confirmed that this
is indeed a maximum.
\end{proof}

We remark that the above result apparently cannot be obtained via standard
sufficiency conditions based on Pontryagin's maximum principle, due to the
nonconvexity of the right hand side of the SIR system \eqref{SIRq}.

Some optimal solutions for particular instances of \eqref{eq:no-cost-problem}
are shown in Figures \ref{fig:example1} and \ref{fig:diff-time-opt},
all with the same initial data and parameters $\beta, \gamma$ but with different final
times $T$.  Allowing for a longer intervention (larger $T$) makes it possible to reach
a more optimal value of $\Sinf$.

\begin{figure}
    \centering
    \subfigure[Solution and control vs. time]{\label{fig:ex1-time}\includegraphics[width=0.65\textwidth]{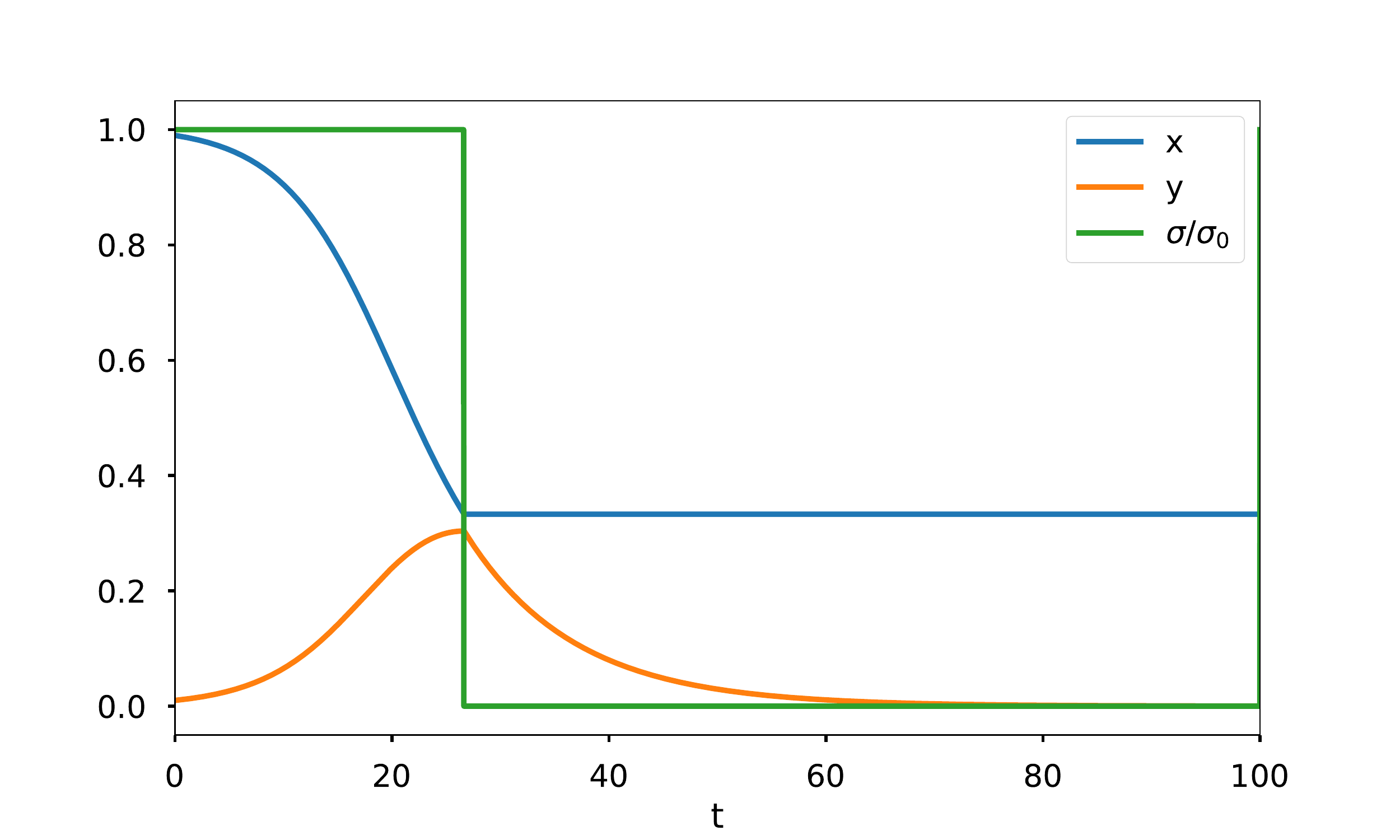}}
    \subfigure[Trajectory in phase space]{\label{fig:ex1-xy}\includegraphics[width=0.34\textwidth]{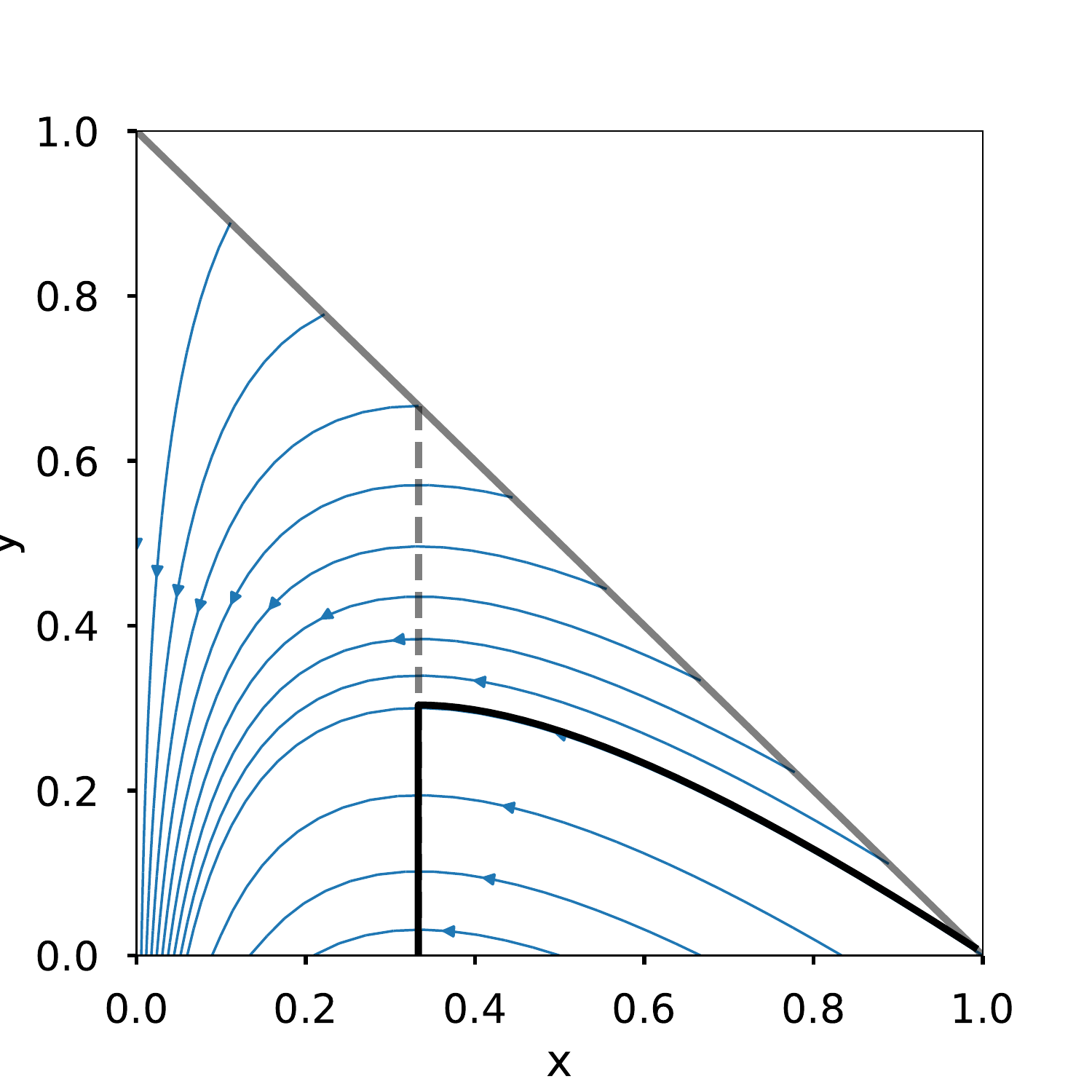}}
    \caption{Typical optimal solution.  Here $(x(0),y(0)) = (0.99,0.01)$, $\beta=0.3$, and $\gamma=0.1$.\label{fig:example1}}
\end{figure}

\begin{figure}
    \centering
    \includegraphics[width=0.5\textwidth]{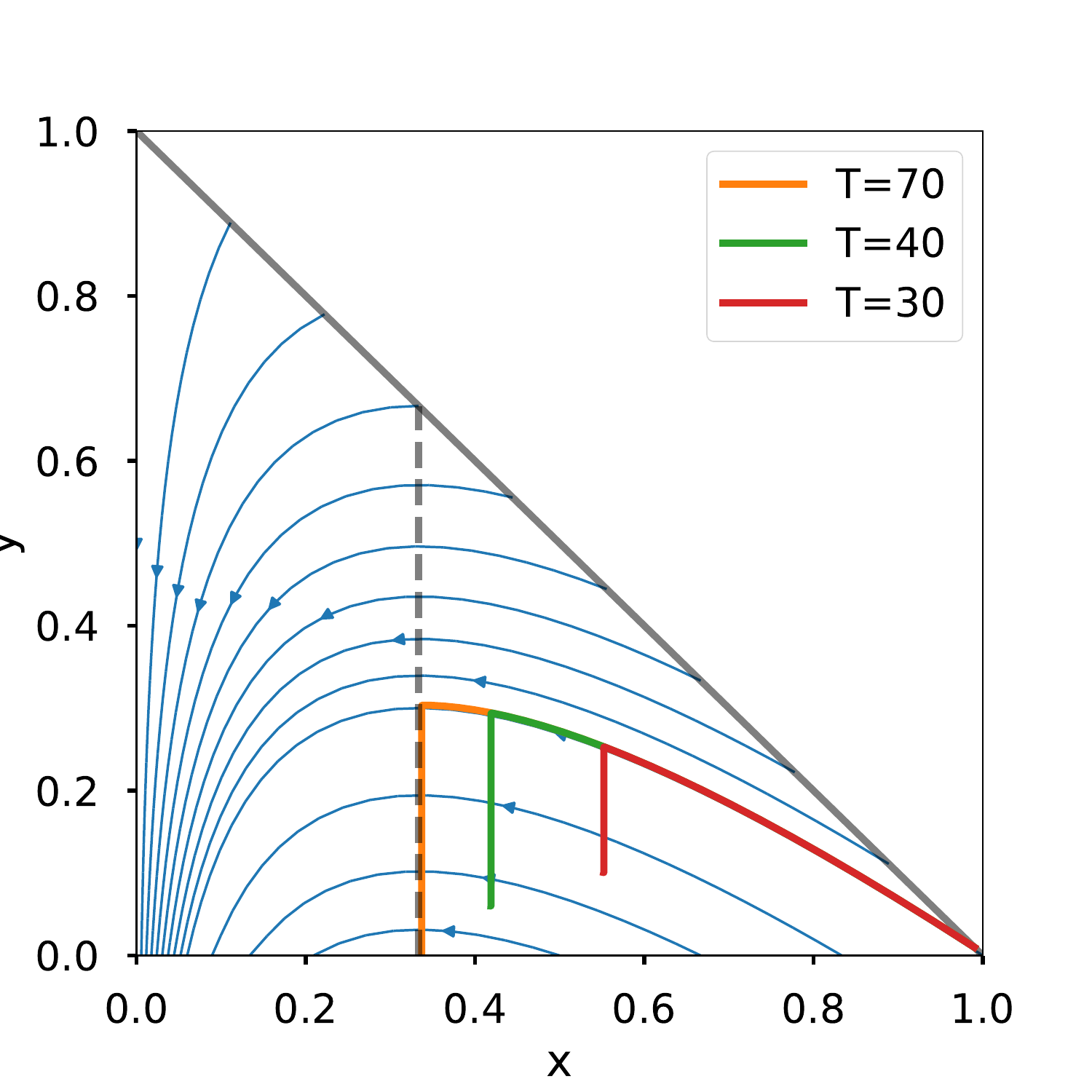}
    \caption{Optimal solutions starting from the same point $(0.99,0.01)$ but with different
        final times.  A larger value of $T$ allows the system to reach a more optimal state.
        For all solutions, $\beta=0.3$ and $\gamma=0.1$.\label{fig:diff-time-opt}}
\end{figure}

In real-world scenarios, it may not be possible to apply the maximum control $\sigma(t)=0$.
Suppose that in place of \eqref{q-shortterm} we impose $\sigma_\textup{min} \le \sigma(t) \le \Rnot$.
In this case the optimal control is still bang-bang with a single switching time.
In Figure \ref{fig:example_2}, we show an optimal solution when $\sigma(t)\ge 0.4\Rnot$ is imposed.

\begin{figure}
    \centering
    \subfigure[Solution and control vs. time]{\label{fig:example_2-t}\includegraphics[width=0.65\textwidth]{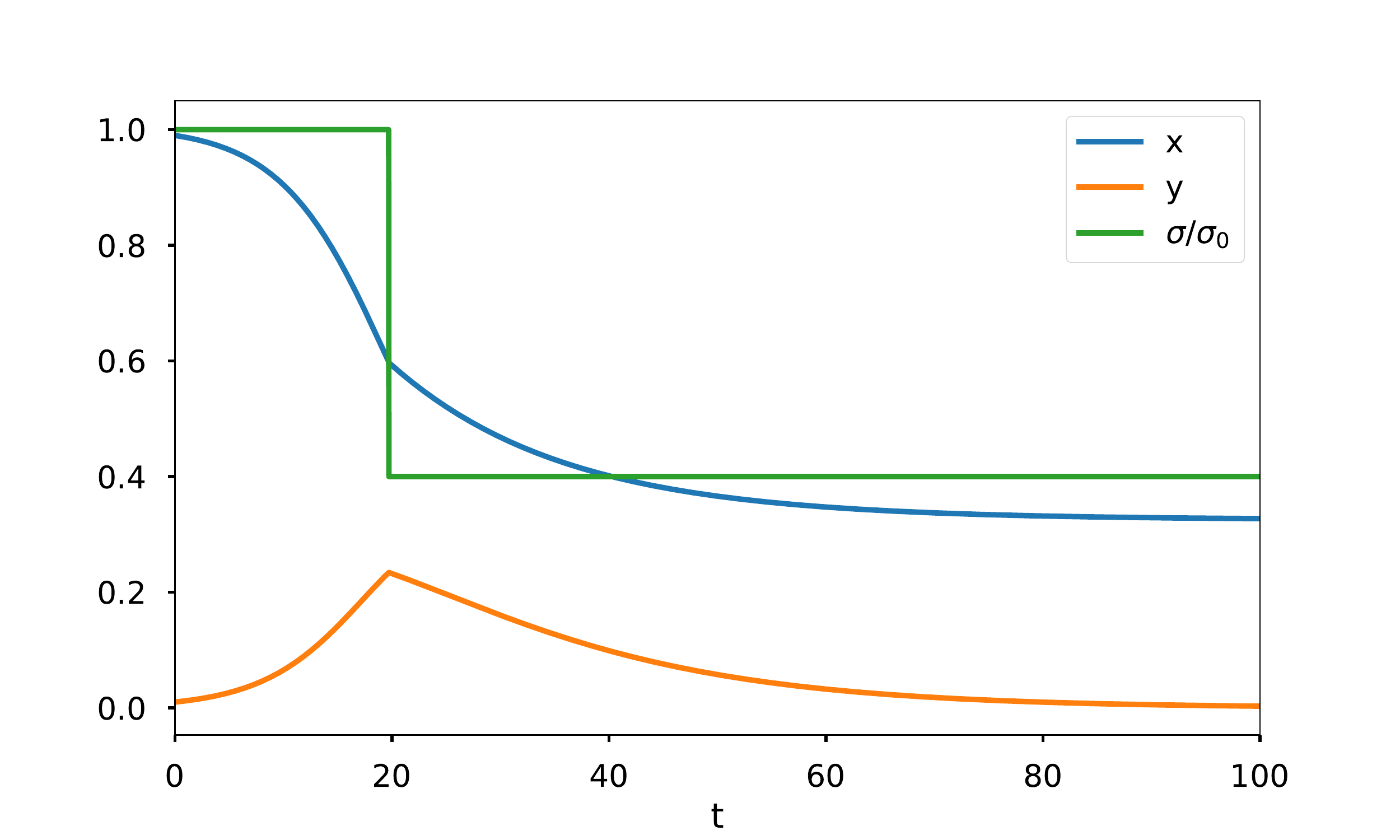}}
    \subfigure[Trajectory in phase space]{\label{fig:example_2-xy}\includegraphics[width=0.34\textwidth]{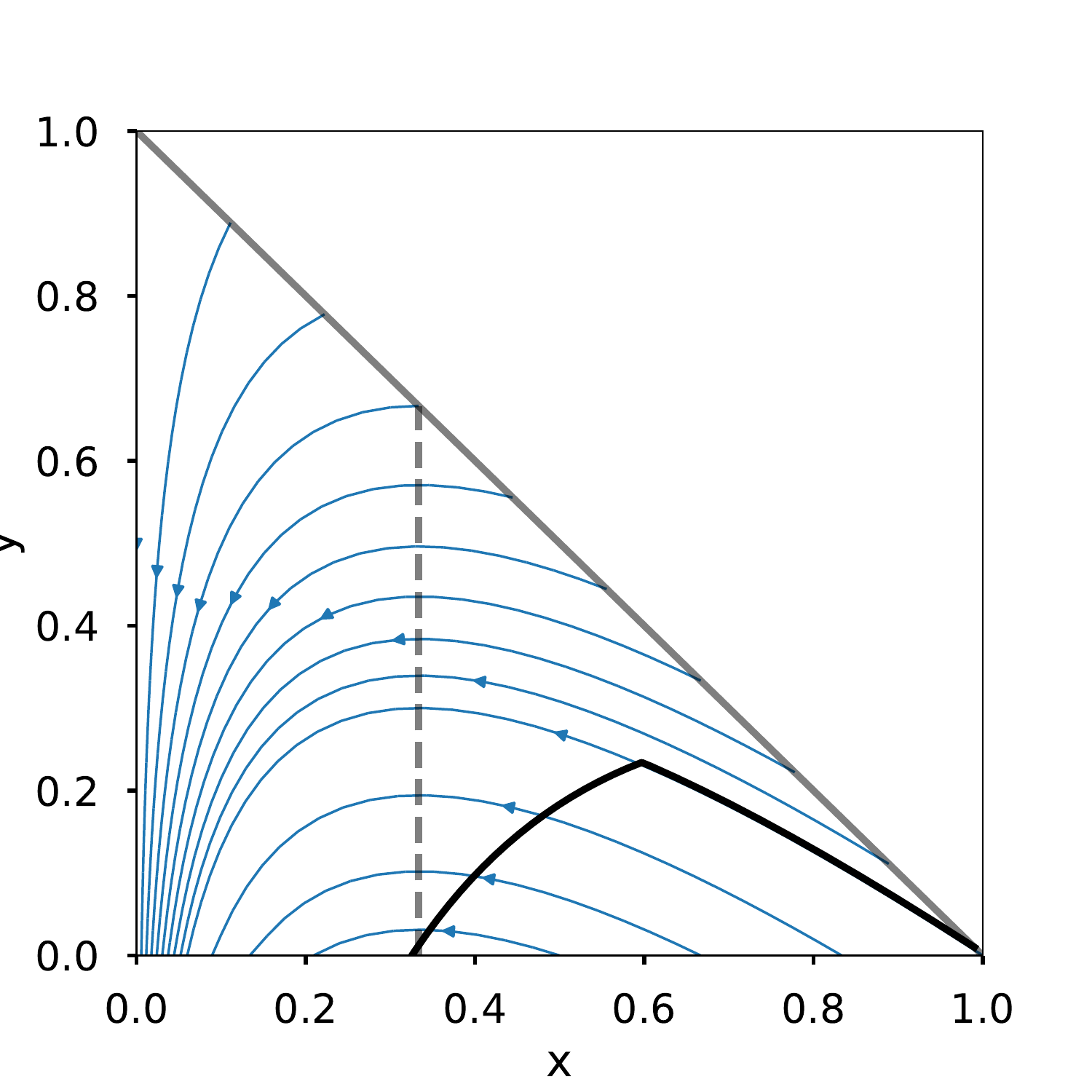}}
    \caption{Optimal solutions with $\sigma(t)\ge 0.4\Rnot$.  Here $(x(0),y(0)) = (0.99,0.01)$, $\beta=0.3$, $\gamma=0.1$, and $T=100$.\label{fig:example_2}}
\end{figure}

The result above can also be obtained via the Hamilton-Jacobi-Bellman (HJB) equation
for \eqref{eq:no-cost-problem}.  Here we sketch this approach.  The HJB
equation for $u(x,y,t)$ can be written
\begin{subequations} \label{eq:hjb-no-cost}
\begin{align}
    u_t & = \gamma y u_y - \gamma x y \min_\sigma \left((u_y-u_x)\sigma\right) \\
    u(x,y,T) & = -\Sinf(x,y,\Rnot).
\end{align}
\end{subequations}
The required minimum is obtained by taking
\begin{align} \label{eq:hjb-control}
    \sigma(t) & = \begin{cases} 
        0 & u_y(x,y,t) > u_x(x,y,t) \\
        \Rnot & u_y(x,y,t)<u_x(x,y,t).
    \end{cases}
\end{align}
From \eqref{xinf-grad} we see that $u_y(x,y,T)>u_x(x,y,T)$ for all $(x,y)$.
Thus for small enough values of $T-t$, the solution of \eqref{eq:hjb-no-cost} satisfies
\begin{align*}
    u_t & = \gamma y u_y(x,y,t).
\end{align*}
The solution of this hyperbolic PDE is
\begin{align*}
    u(x,y,t) & = u(x,ye^{-\gamma(T-t)},T) = -\Sinf(x,ye^{-\gamma(T-t)}).
\end{align*}
Thus, for small enough $T-t$,
\begin{align*}
    u_x(x,y,t) & = -\frac{\partial \Sinf}{\partial y} \left(1- \frac{1}{x(t)\Rnot}\right) \\
    u_y(x,y,t) & = -\frac{\partial \Sinf}{\partial y} e^{-\gamma(T-t)}.
\end{align*}
According to \eqref{eq:hjb-control}, the optimal control value will switch when $u_x=u_y$, which
leads to \eqref{xtstar}.  Meanwhile, substituting \eqref{eq:hjb-control} in \eqref{eq:hjb-no-cost}
in the case $u_y<u_x$ yields the linear hyperbolic PDE
$$
    u_t = \gamma y u_y - \beta x y (u_y-u_x),
$$
whose characteristics are just the trajectories of the SIR system
\eqref{SIR} illustrated in Figure \ref{fig:dynamics}, which are also
contours of $\Sinf$.  It can be shown that once $u_y-u_x<0$, this inequality
will continue to hold along each such characteristic.

\section{Optimal control with $L\ne 0$\label{sec:exploration}}
We now consider the case of a non-zero Lagrangian, which allows us to account for factors
like the economic cost of intervention or heightened risks caused by hospital
overflow.  We formulate the Hamiltion-Jacobi-Bellman (HJB) equation for this problem and
apply an upwind numerical method to compute approximate solutions.
The numerical solutions obtained via the HJB equation have also been checked in each case
against solutions of the BVP given in Section \ref{sec:pmp}, and found to
agree within numerical errors.

Because the Lagrangian in this section is not a linear function, the
solution is not bang-bang, and instead varies smoothly (except when it
reaches the minimum or maximum allowable value).

\subsection{Quadratic running cost of control}
We now attempt to account for the economic cost of intervention.  Quantification
of the cost of measures like closing schools and businesses is a challenging
problem in economic modeling, and well outside the scope of the present work.
Based on the general idea that both the cost and the marginal cost will increase
with the degree of contact reduction, we take for simplicity
$$
    L(x(t),y(t),\sigma(t)) = c_2 \left(1-\frac{\sigma(t)}{\Rnot}\right)^2.
$$
The HJB equation for \eqref{eq:basic-problem} is then
\begin{subequations} \label{eq:hjb-q-cost}
\begin{align} \label{eq:hjb-q-cost-pde}
    u_t - \gamma y u_y & = - \min_{0\le \sigma\le \Rnot} \left((u_y-u_x)\gamma x y \sigma(t) + c_2 \left(1-\frac{\sigma(t)}{\Rnot}\right)^2 \right) \\
    u(x,y,T) & = -\Sinf(x,y,\Rnot).
\end{align}
\end{subequations}
The minimum in \eqref{eq:hjb-q-cost-pde} is obtained with
\begin{align} \label{eq:hjb-sigma}
    \sigma(t) & = \Rnot\min\left(1,\max\left(0,\left(1-\frac{\Rnot\gamma}{2c_2}x y (u_y-u_x)\right)\right)\right).
\end{align}
We approximate the solution of \eqref{eq:hjb-q-cost}-\eqref{eq:hjb-sigma} using a second-order
finite volume discretization with the PyClaw software \cite{2012_pyclaw-sisc,2013_sharpclaw,2016_clawpack}.
For details, the reader is referred to the reproducibility repository that contains the code
for all examples in this paper \cite{ketcheson2021SIRRepro}.

Numerical solutions for a range of values of $c_2$ are shown in Figure \ref{fig:varying_c2}.
The values of $c_2$ used here are chosen merely to illustrate the range of possible behaviors.
Notice that the strength of the control $\sigma(t)$ and the number of infected at certain times vary non-monotonically
with $c_2$.  Indeed, the optimal
control $\sigma(t)$ up to around day 15 is simply $\Rnot$ in both limits $c_2 \to \infty$ and $c_2 \to 0$,
whereas for intermediate values of $c_2$ some intervention is imposed in this period.

\begin{figure}
    \centering
    \subfigure[Solution (solid line) and control $\sigma(t)/\Rnot$ (dashed line) vs. time]{\label{fig:varying-c2}\includegraphics[width=0.65\textwidth]{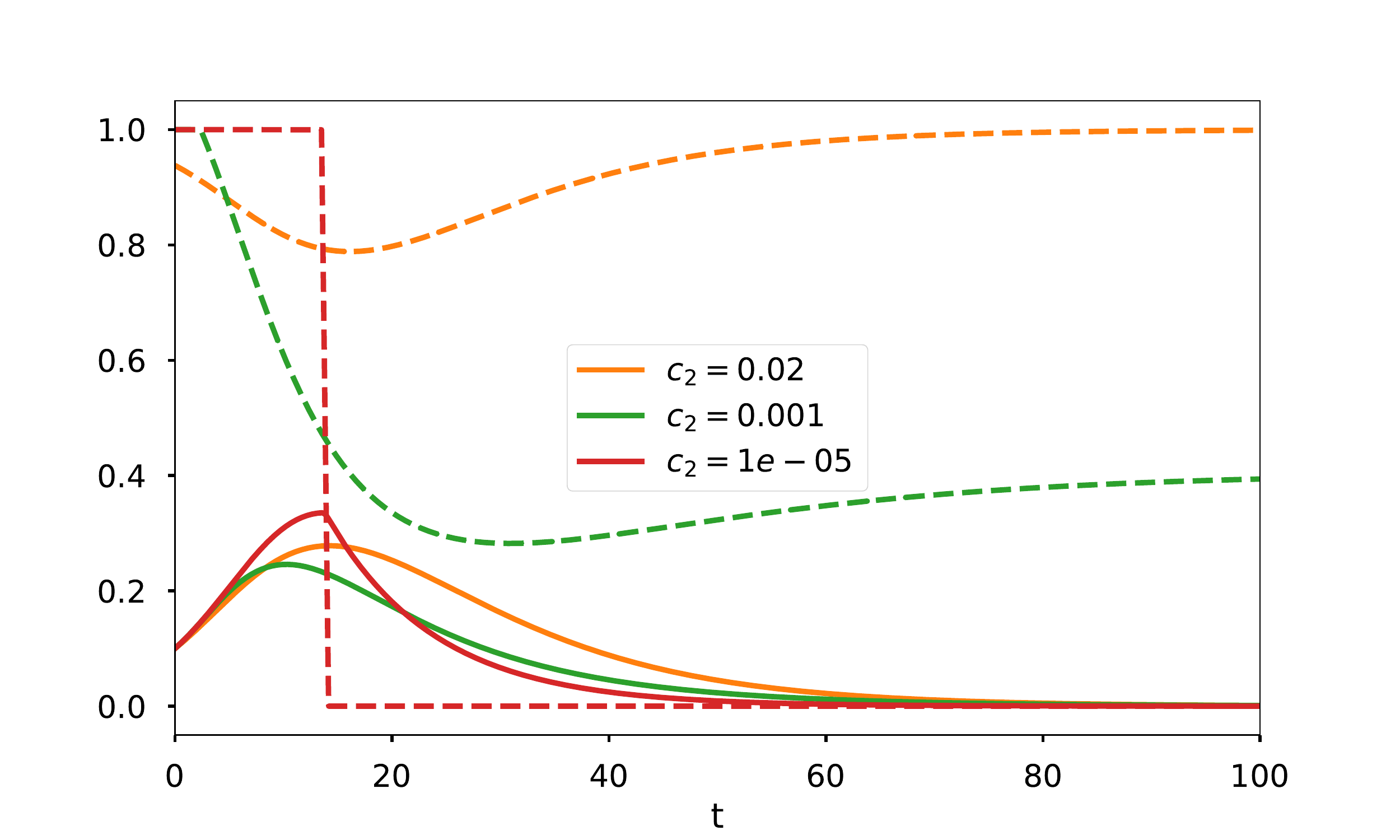}}
    \subfigure[Trajectory in phase space]{\label{fig:varying-cs-xy}\includegraphics[width=0.34\textwidth]{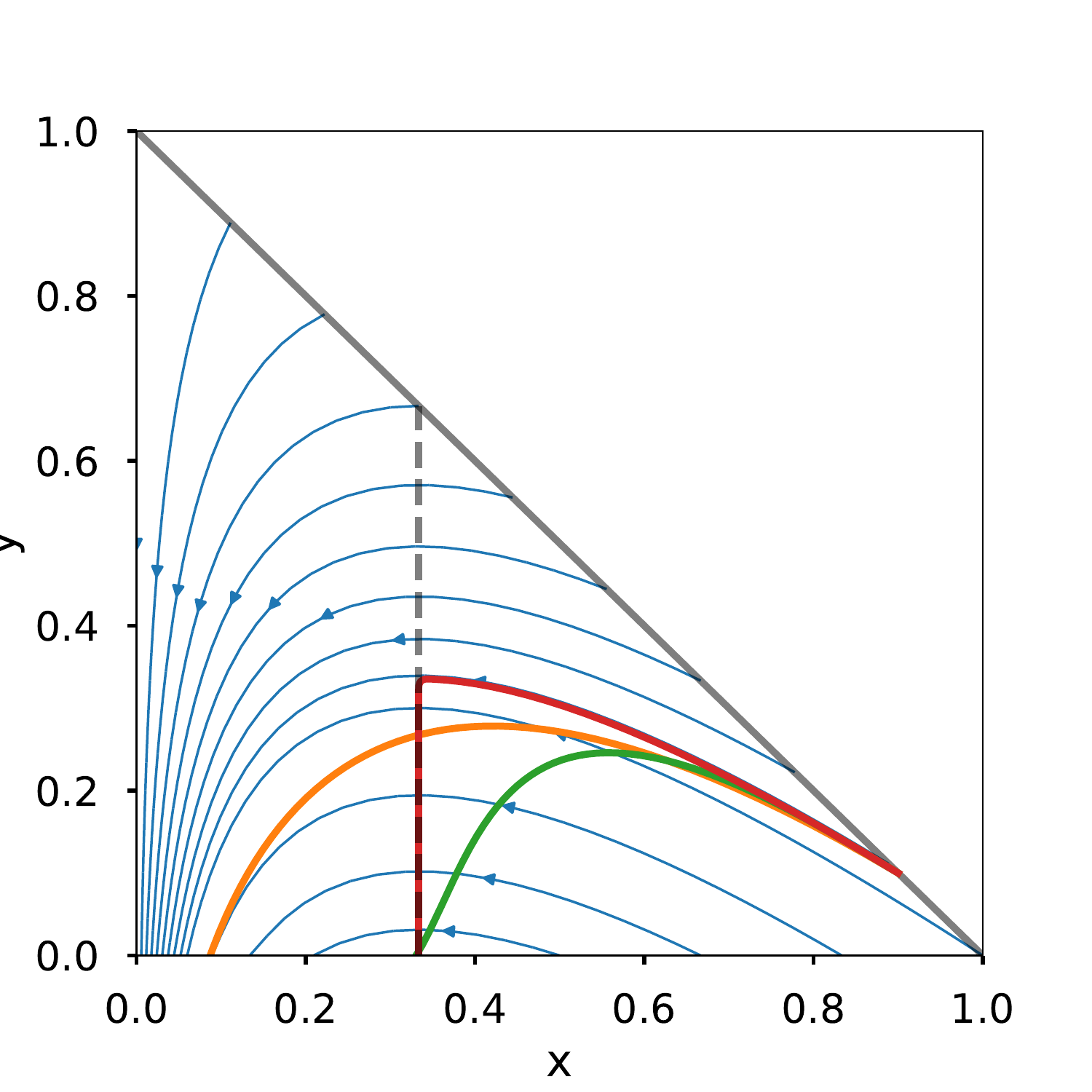}}
    \caption{Optimal solutions with different running cost.  Here $(x(0),y(0)) = (0.9,0.1)$, $\beta=0.3$, $\gamma=0.1$, and $T=100$.\label{fig:varying_c2}}
\end{figure}

\subsection{Minimizing hospital overflow}
The optimal solutions above may be unsatisfactory in practice,
since the number of people simultaneously infected at certain times
may be too great for all of them to receive adequate medical care.  This is a major concern
with respect to the current COVID-19 crisis.  A natural objective is to keep
the number of infected below some threshold, corresponding for instance to the
number of hospital beds.  We thus consider the Lagrangian
$$
    L(x(t),y(t),\sigma(t)) = c_2 \left(1-\frac{\sigma(t)}{\Rnot}\right)^2 + c_3 g(y(t)-y_\text{max}).
$$
Here $\ymax$ is the maximum number of hospital beds.
The HJB equation is then
\begin{subequations} \label{eq:hjb-hosp}
\begin{align} \label{eq:hjb-hosp-pde}
    u_t - \gamma y u_y + c_3 g(y-y_\text{max}) & = - \min_{0\le \sigma\le \Rnot} \left((u_y-u_x)\gamma x y \sigma(t) + c_2 \left(1-\frac{\sigma(t)}{\Rnot}\right)^2 \right) \\
    u(x,y,T) & = -\Sinf(x,y,\Rnot).
\end{align}
\end{subequations}
The control that achieves the minimum in \eqref{eq:hjb-hosp-pde} is again given by \eqref{eq:hjb-sigma}.
The function $g(v)$ should be nearly zero for $v<0$ and increase
in an approximately linear fashion for $v>0$.  For the purpose of having
a tractable control problem, it is also desirable that $g$ be differentiable.
We take
$$
g(v) = \frac{v}{1+e^{-100v}}.
$$
Figures \ref{fig:min_hosp_1} and \ref{fig:min_hosp_2} show examples of solutions.
Again, we choose parameter values that demonstrate the range of qualitative behaviors.
In both examples, the cost of control is scaled by $c_2=10^{-2}$.
In Figure \ref{fig:min_hosp_1}, a higher cost for hospital overflow is applied,
with $c_3=100$.  As might be expected, $y(t)$ is generally
kept below $\ymax$ (which is set to $0.1$).
The control is initially off, then turns on
to avoid hospital overflow, and then turns off again.  While the control is applied,
it is maintained at a level that keeps the value of $y(t)$ nearly constant in time.

\begin{figure}
    \centering
    \subfigure[Solution and control vs. time]{\label{fig:minhosp1-time}\includegraphics[width=0.65\textwidth]{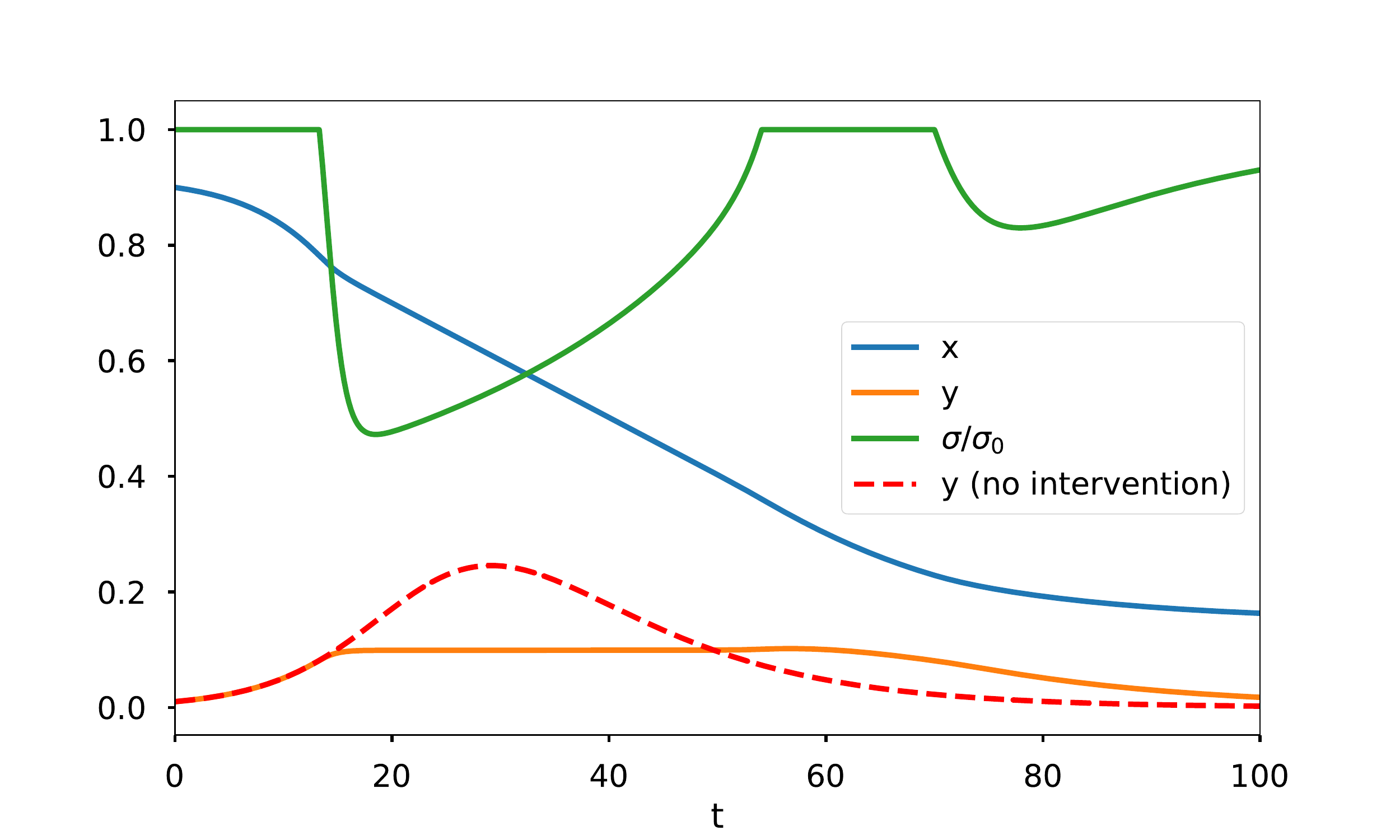}}
    \subfigure[Trajectory in phase space]{\label{fig:minhosp1-xy}\includegraphics[width=0.34\textwidth]{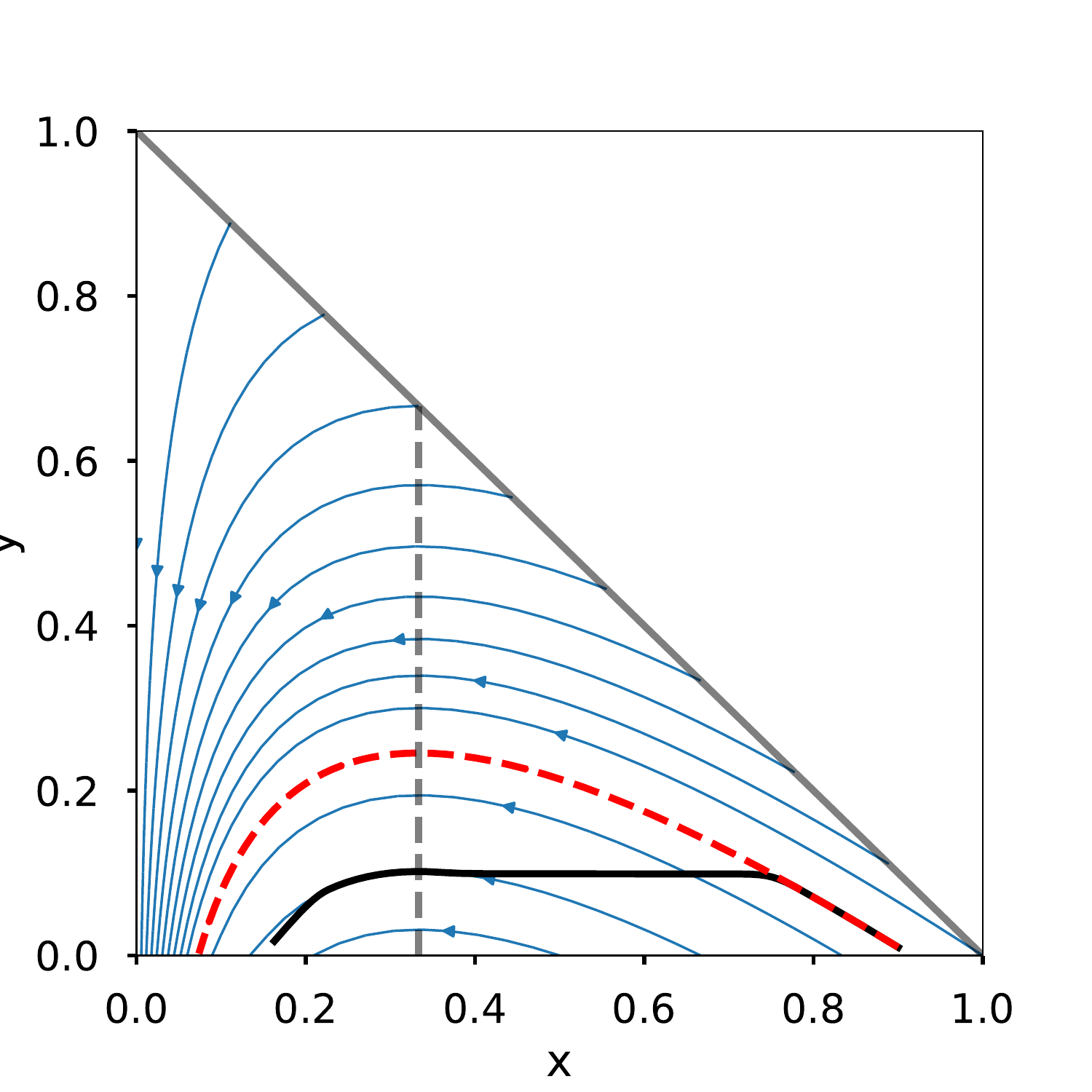}}
    \caption{Optimal solutions with cost for hospital overflow.  Here $(x(0),y(0)) = (0.9,0.01)$, $\beta=0.3$, $\gamma=0.1$, $T=100$,
        and $\ymax=0.1$.
        In the cost function, we take $c_2=10^{-2}$ and $c_3=100$.
        The dashed red line shows the result of imposing no control.
        \label{fig:min_hosp_1}}
\end{figure}

Figure \ref{fig:min_hosp_2} shows another example scenario in which the cost of hospital overflow is smaller,
with $c_3=1$.  In this case the hospital capacity is significantly exceeded for a short time,
and the control is kept on until the final time, but the epidemiological overshoot
is significantly reduced compared to the previous solution.

\begin{figure}
    \centering
    \subfigure[Solution and control vs. time]{\label{fig:minhosp2-time}\includegraphics[width=0.65\textwidth]{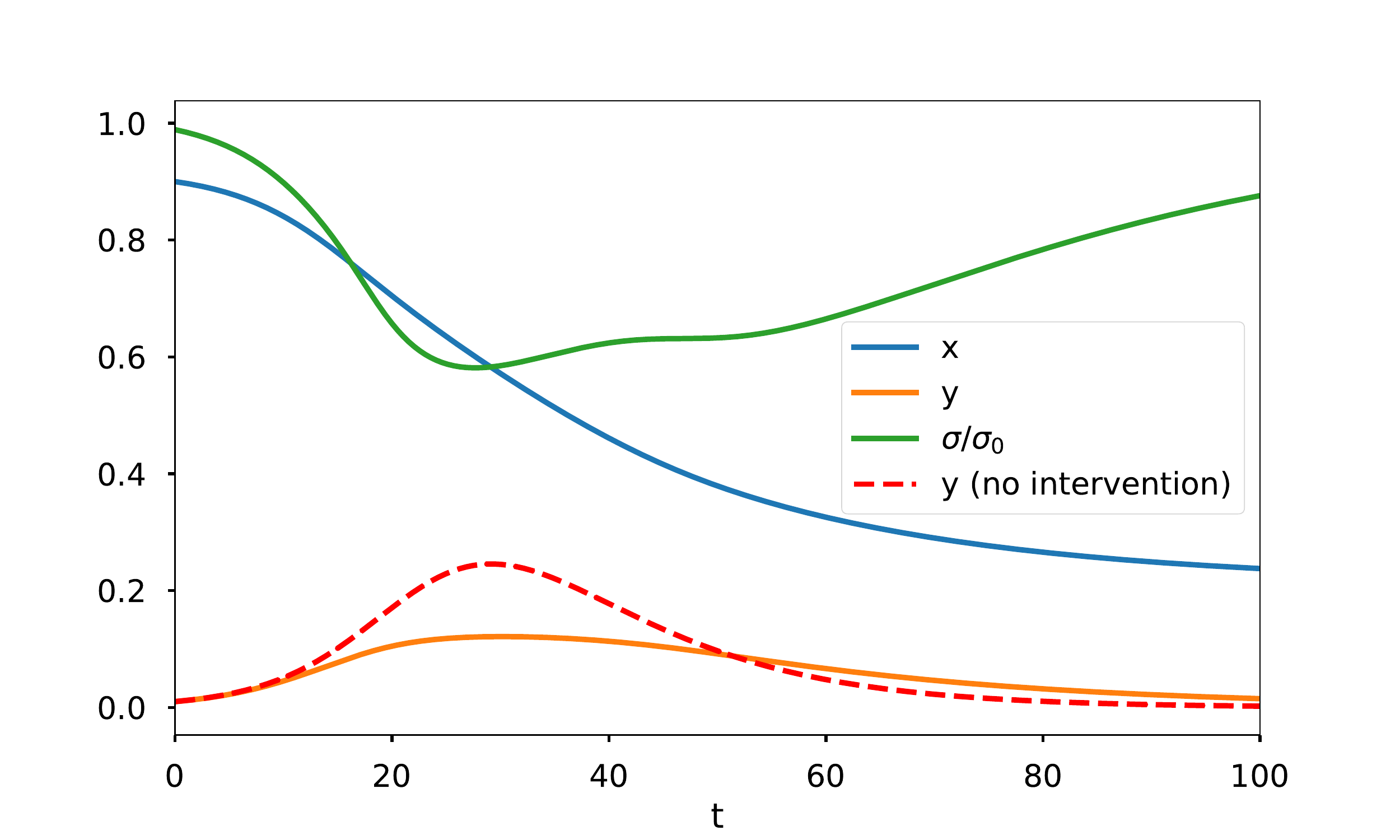}}
    \subfigure[Trajectory in phase space]{\label{fig:minhosp2-xy}\includegraphics[width=0.34\textwidth]{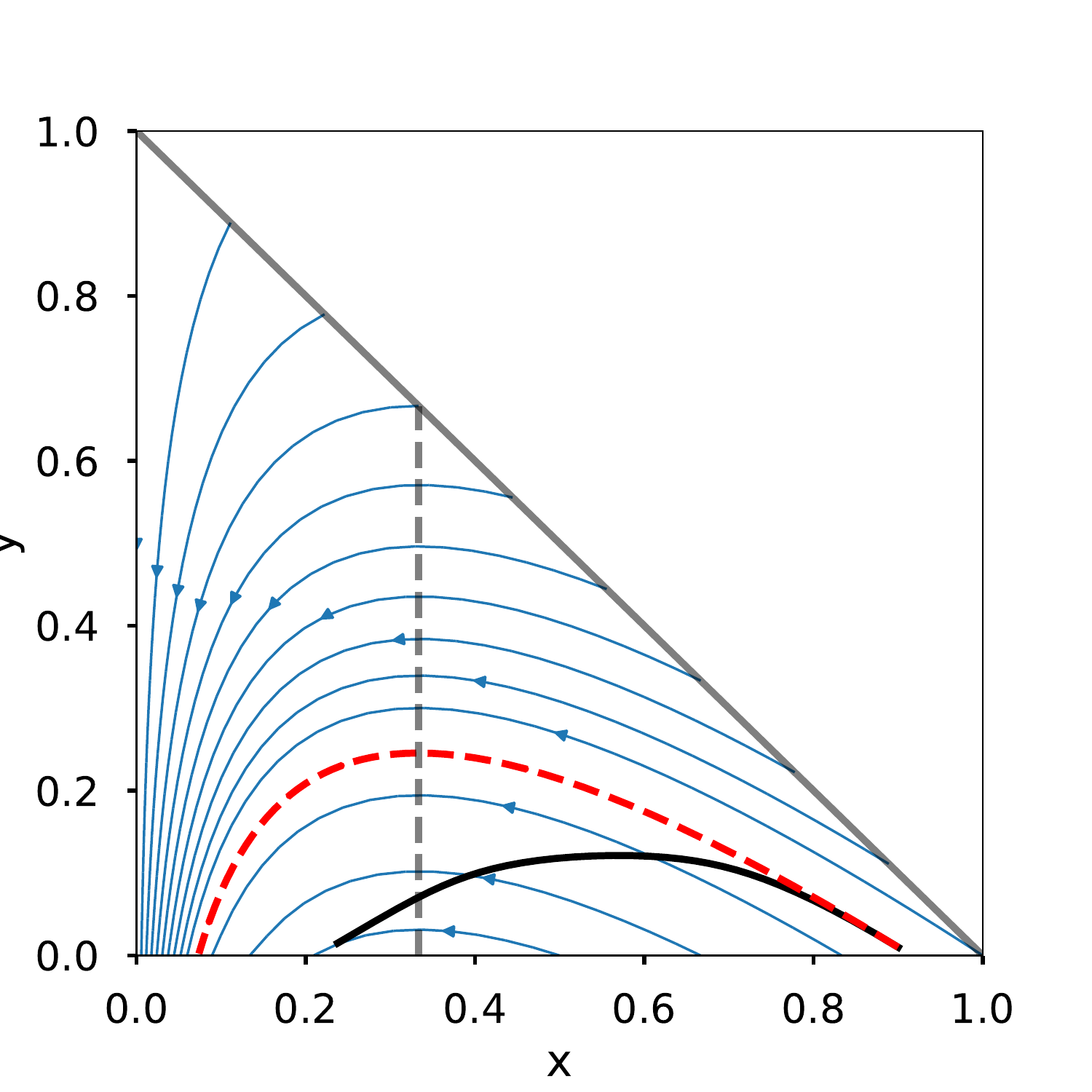}}
    \caption{Optimal solutions with smaller cost for hospital overflow.  Here $(x(0),y(0)) = (0.9,0.01)$, $\beta=0.3$, $\gamma=0.1$, $T=100$,
        and $\ymax=0.1$.
        In the cost function, we take $c_2=10^{-2}$ and $c_3=1$.
        The dashed red line shows the result of imposing no control.
        \label{fig:min_hosp_2}}
\end{figure}

\section{Application to the COVID-19 pandemic\label{sec:application}}
The main goal of this work has been a mathematical investigation of optimal
controls for the SIR model with a controlled rate of contact, as presented
in the previous sections.  We now present a brief illustration of the results
in practical terms through application to the current COVID-19 pandemic.
This application is imprecise, for several reasons:  the SIR model
is one of the simplest epidemiological models, and assumes homogeneous mixing
among a population; the current state of susceptible and infected persons is
not accurately known; and the parameters of the disease itself (i.e. $\gamma, \Rnot$)
are still quite uncertain.  
The examples in this section should be viewed only as illustrations
of a few possible scenarios, and not an exhaustive or detailed study.

We take the infectious period $\gamma^{-1}=10$ days, and the basic reproduction
number $\Rnot=3.2$, based on recent estimates \cite{verity2020estimates,liu2020reproductive}.
To make the results easy to interpret, we use a
fixed terminal cost of $c_1 z_\infty$, where we have introduced an additional
scaling constant.  Taking $c_1 = \alpha N$, where $N$ is the total population
being modeled and $\alpha$ is the infection fatality ratio, then this cost is
the expected number of lives lost.  Since $z_\infty=1-\Sinf$, this is merely a
rescaling of the terminal cost used throughout this work.  We take
$\alpha \approx 0.006$ based on recent estimates \cite{verity2020estimates,russell2020estimating,wu2020estimating}.

We seek reasonable order-of-magnitude estimates for $c_2$ and $c_3$.  
The value of $c_3/N$ should be equal to the increase in probability of a given
infected person dying because of the lack of medical care.  We take
$c_3 = N\eta$,
where the fatality ratio in the absence of medical care is $\alpha+\eta$.
We take $\eta\approx \alpha$, giving
$c_3 = 0.006$.  For $\ymax$ we take values from the United States, where
there are about 3 hospital beds per 1000 people, and two-thirds of them
are typically occupied.  Since it is estimated that about 5\% of COVID-19
cases are hospitalized \cite{verity2020estimates}, this gives $\ymax=0.02N$.

Any attempt to quantify the cost of an intervention in human lives is bound
to be contentious.  Whether we consider the value of a human life to be in
intrinsic personal value or extrinsic economic value, we can view the cost
of intervention as a reduction of the value of human lives during the intervention
period.  We take $c_2 = N\epsilon/d$ where $d\approx 10^4$ is the number
of days in a human life (more precisely, the average number of days remaining
in a life claimed by the disease) and $1-\epsilon$ is the relative value of a
day spent in full isolation ($\sigma=0$) compared to a day without intervention.
Taking $\epsilon=0.2$, we have $c_2 = 2\times10^{-5} N$.

Since all terms in the cost function are proportional to $N$, we take $N=1$ without
loss of generality.  Results for the parameter values given above are shown in Figure
\ref{fig:real-world-1}.  We see that the optimal control corresponds to a level
of intervention that becomes more strict as the epidemic grows, and is gradually
relaxed as the epidemic subsides.  Most importantly, and in agreement with results
from the examples in earlier sections, the strongest control is applied around
the time of peak infection and shortly thereafter.
The resulting epidemiological overshoot is very small.

\begin{figure}
    \centering
    \subfigure[Solution and control vs. time]{\label{fig:real_world_1-time}\includegraphics[width=0.65\textwidth]{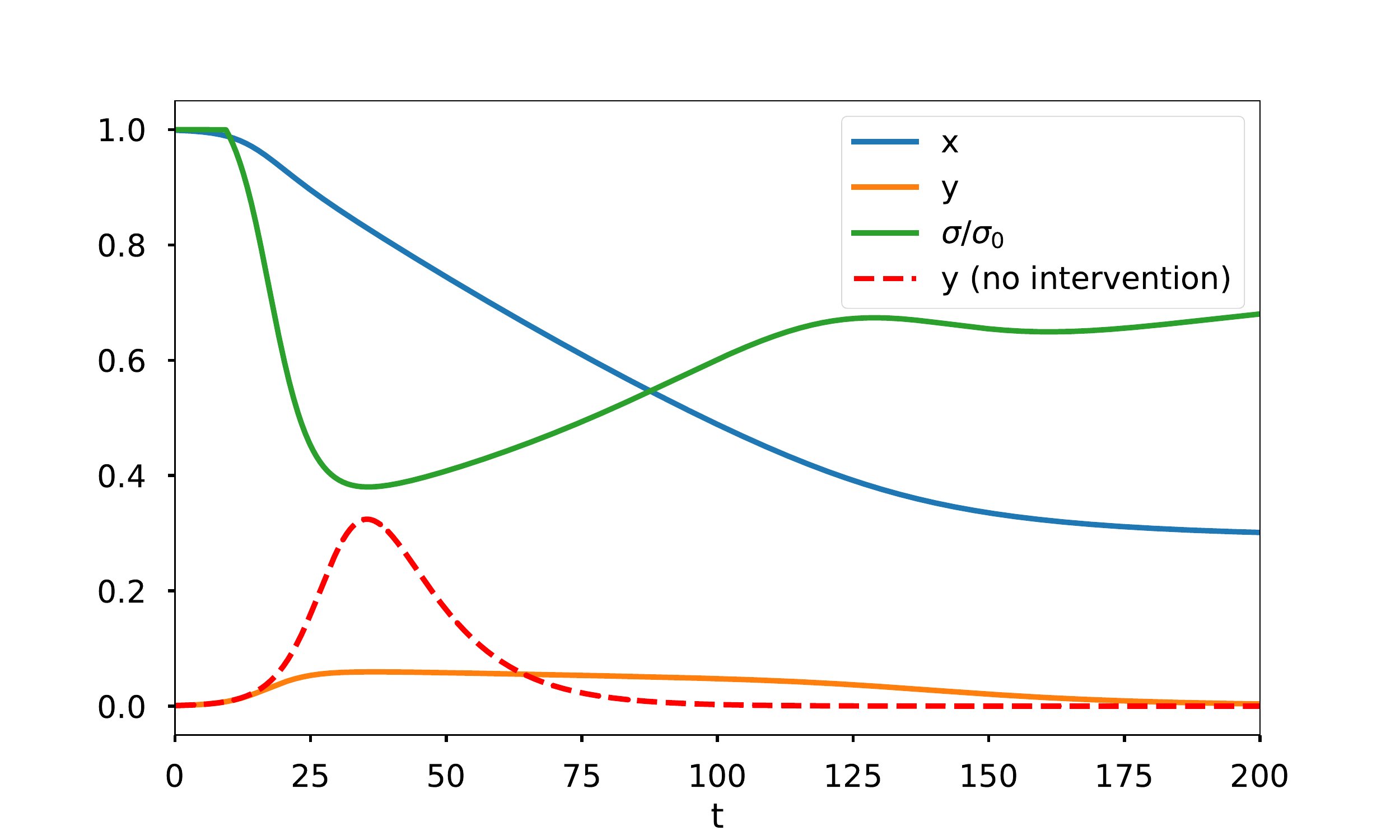}}
    \subfigure[Trajectory in phase space]{\label{fig:real_world_1-xy}\includegraphics[width=0.34\textwidth]{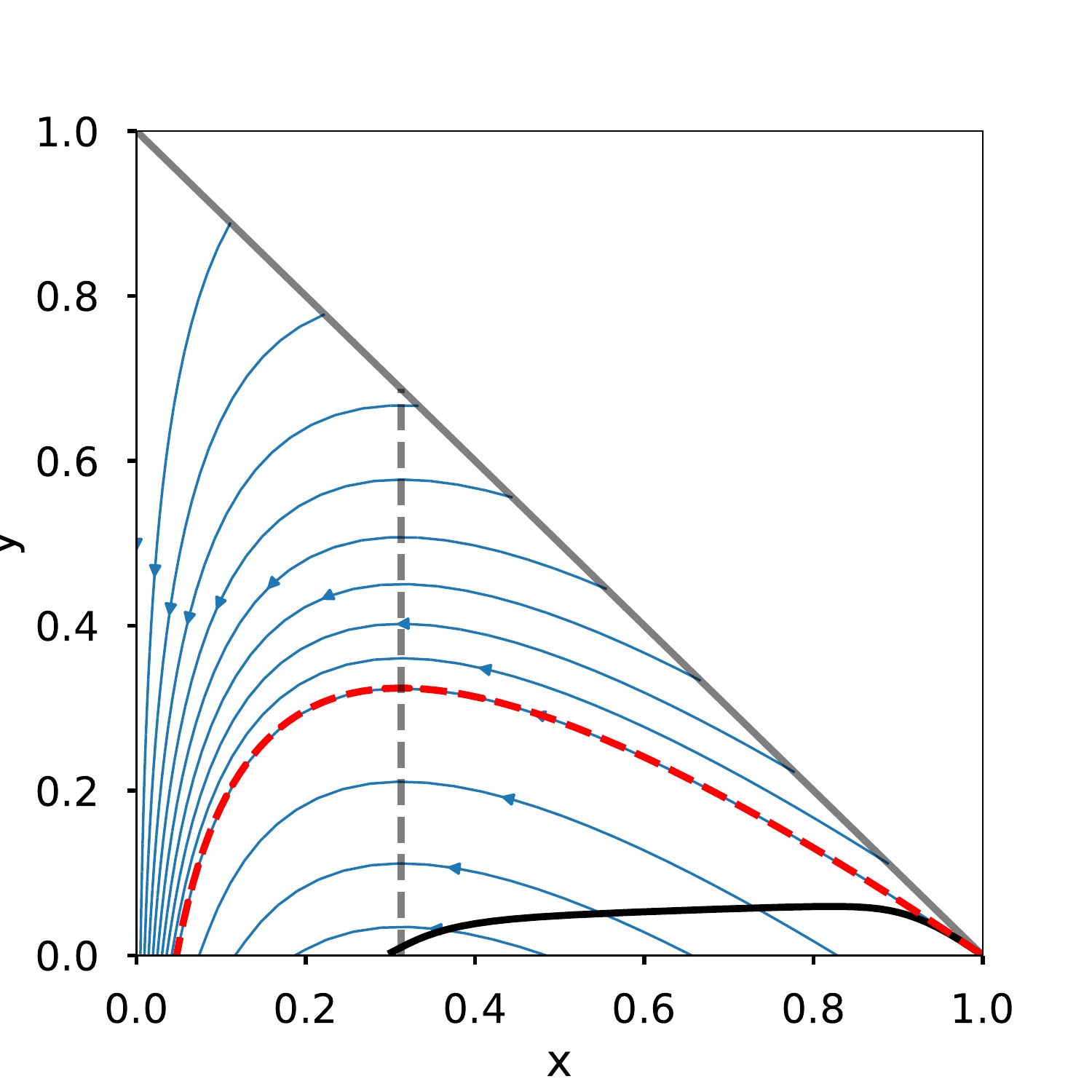}}
    \caption{Optimal control for COVID-19 with $\Rnot=3.2$, $\gamma=0.1$,
                $\alpha=\eta=0.006$, $\epsilon=0.2$, $d=10^4$, $T=200$, and $(x(0),y(0)) =
                (0.999,0.001)$.
                The dashed red line shows the result of imposing no control.
                \label{fig:real-world-1}}
\end{figure}

An alternative scenario is shown in Figure \ref{fig:real-world-2}, in which
we have assumed a fatality ratio and a value of $\eta$ that are twice as
large (in line with the highest estimates of the infection fatality ratio),
as well as taking a smaller cost of intervention with $\epsilon=0.05$.  These
parameters lead to stronger intervention, especially in the later phases of the epidemic.
The result is almost no epidemiological overshoot.

\begin{figure}
    \centering
    \subfigure[Solution and control vs. time]{\label{fig:real_world_2-time}\includegraphics[width=0.65\textwidth]{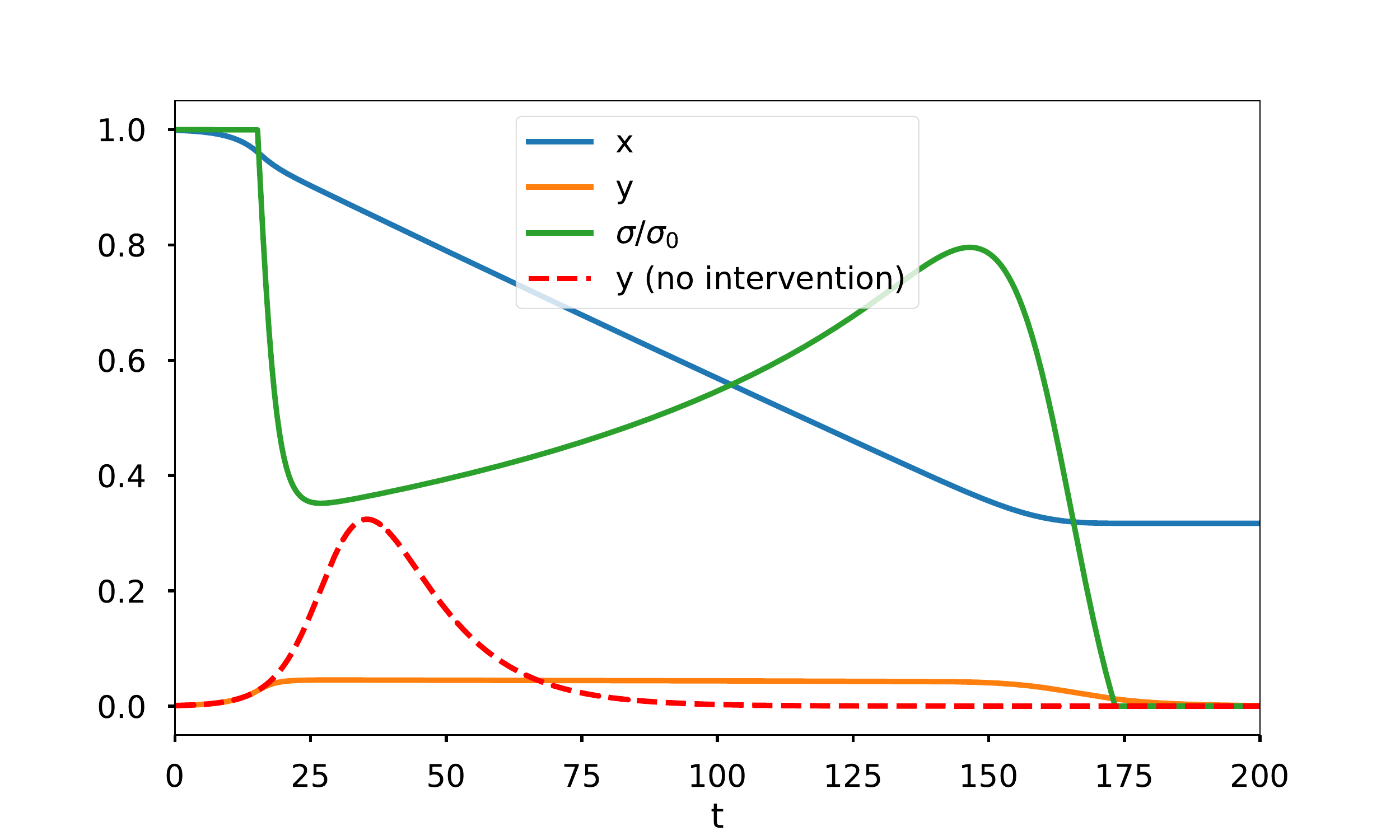}}
    \subfigure[Trajectory in phase space]{\label{fig:real_world_2-xy}\includegraphics[width=0.34\textwidth]{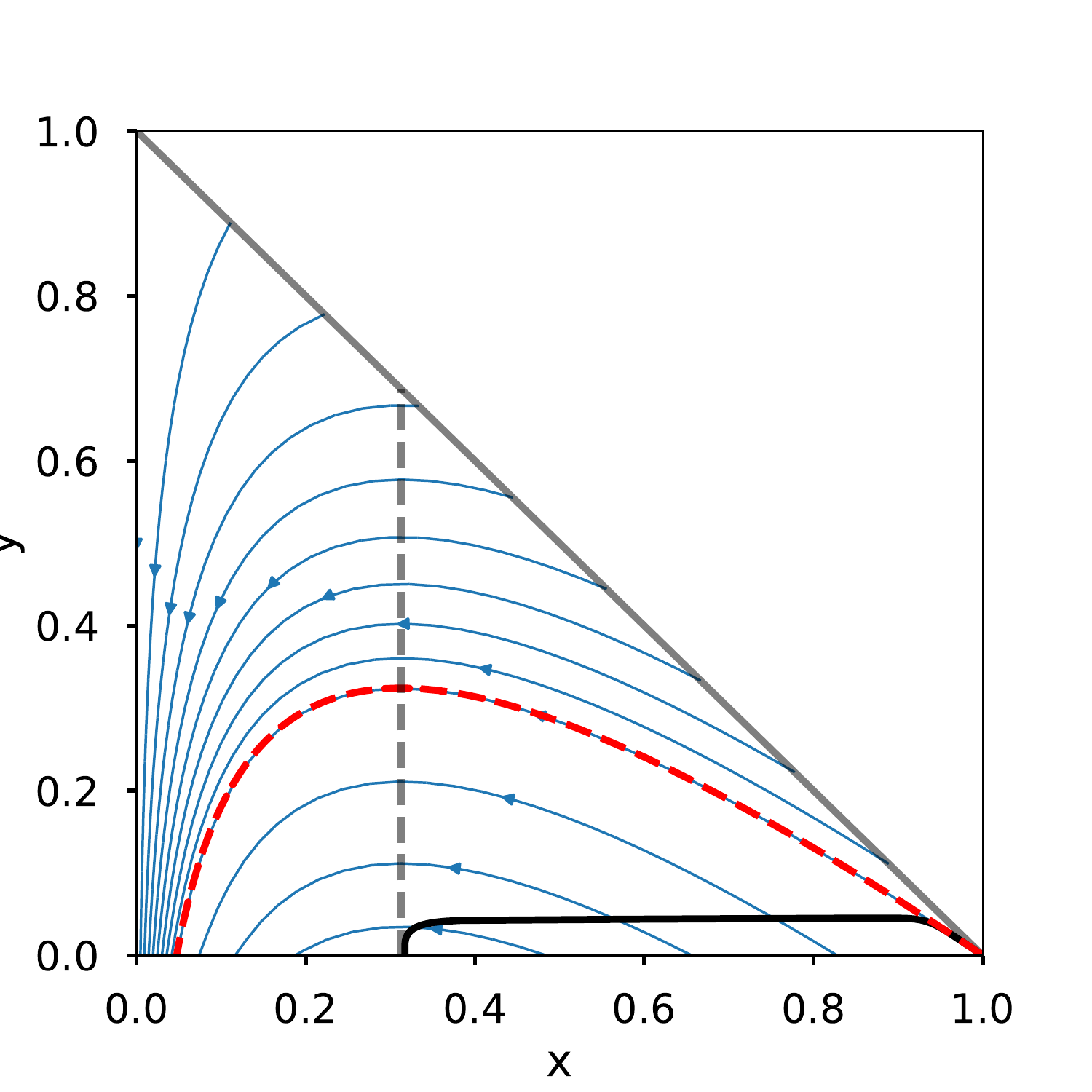}}
    \caption{Optimal control for COVID-19 with $\Rnot=3.2$, $\gamma=0.1$,
                $\alpha=\eta=0.012$, $\epsilon=0.05$, $d=10^4$, $T=200$, and $(x(0),y(0)) =
                (0.999,0.001)$.
                The dashed red line shows the result of imposing no control.
                \label{fig:real-world-2}}
\end{figure}

Finally, in Figure \ref{fig:real-world-3}, we repeat the first scenario
but increase the cost of control by taking $\epsilon=1$.
In this case a more mild control is applied, peaking at about
35\% contact reduction and concentrated around the time of the
infection peak.  In this case the optimal solution includes a small
but significant epidemiological overshoot, and significantly exceeds
the available hospital beds for a certain period of time.

\begin{figure}
    \centering
    \subfigure[Solution and control vs. time]{\label{fig:real_world_3-time}\includegraphics[width=0.65\textwidth]{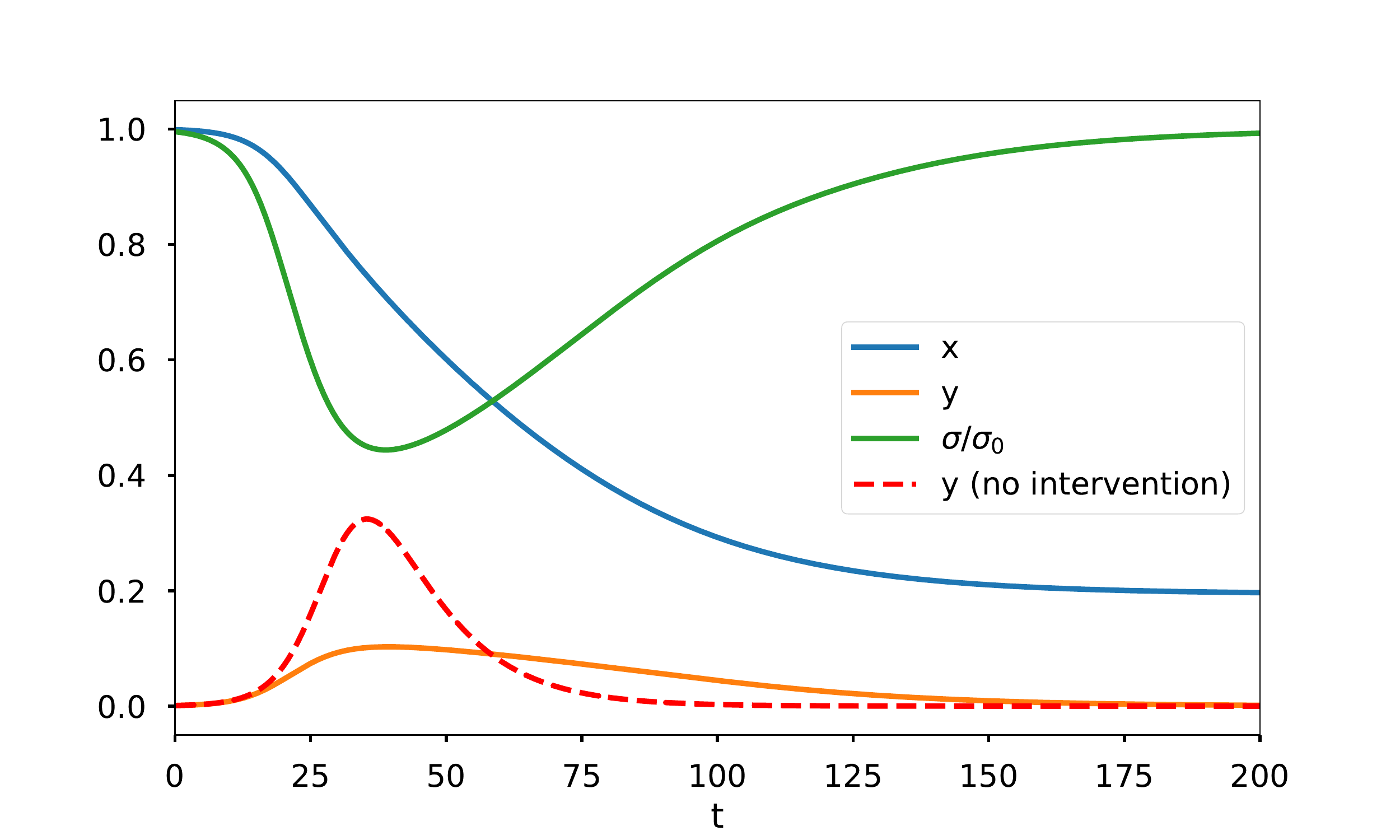}}
    \subfigure[Trajectory in phase space]{\label{fig:real_world_3-xy}\includegraphics[width=0.34\textwidth]{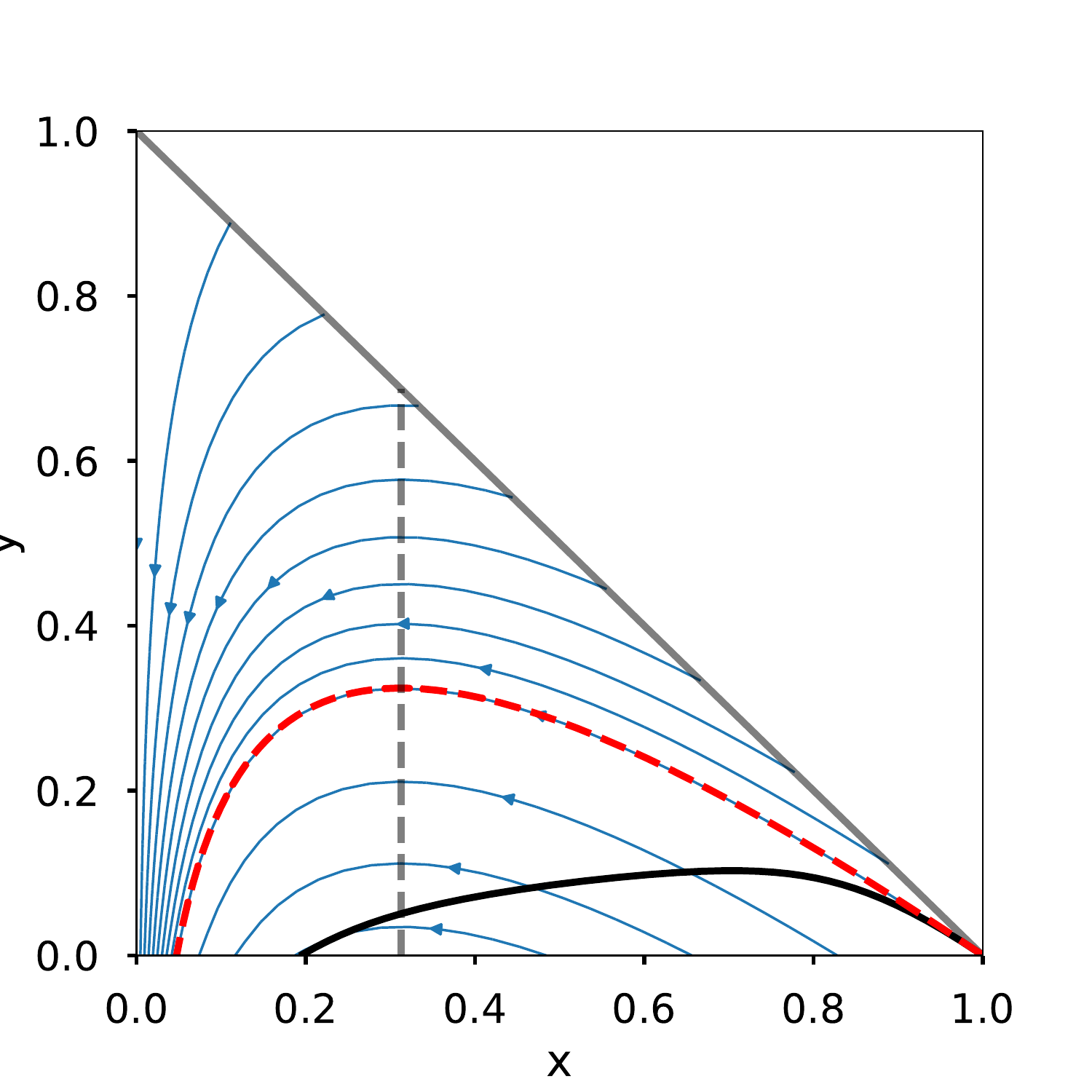}}
    \caption{Optimal control for COVID-19 with $\Rnot=3.2$, $\gamma=0.1$,
                $\alpha=\eta=0.006$, $\epsilon=0.5$, $d=10^4$, $T=200$, and $(x(0),y(0)) =
                (0.999,0.001)$.
                The dashed red line shows the result of imposing no control.
                \label{fig:real-world-3}}
\end{figure}

\section{Conclusion\label{sec:conclusion}}
We have studied, for an SIR model with a control on the rate of contact, the
problem of minimizing the eventually infected population in the long-time limit,
when the control can be applied only up to a finite time.  In the absence of any
cost of intervention, the optimal strategy is to apply no control until a
certain switching time, and then apply maximum control.  We have also considered
other objective functions that include a running cost of control and a penalty for
large numbers of simultaneous infections.

Contrary to simple intuition, it is not optimal to impose the maximum
level of intervention from the earliest possible time.  But real-world
studies have supported this observation; a too-strong intervention
may simply lead to a strong second wave of infection after
the intervention is lifted, and not significantly reduce epidemiological
overshoot \cite{bootsma2007effect}.  On the other hand, intervention that
starts too late or is lifted too soon may also have a negligible
effect on total mortality \cite{bootsma2007effect,hatchett2007public,markel2007nonpharmaceutical}.
The idea that intervention should possibly be delayed in order to increase
its effect was also found in \cite{ballard2017intervention}, although 
the objective and optimal policy found there differ from the present work.


The general results obtained here may provide insight
into what optimal intervention strategies and their consequences may look like,
but this should be informed by additional insight that can be gained
from more detailed models.
This work could form the basis of more detailed real-world application,
using values of the disease parameters, costs, and effectiveness of NPIs
relevant to a specific population of interest.  Although the SIR model
is perhaps the simplest mathematical epidemiological model available,
it has the advantage of requiring only a few parameters to be constrained.
Results based on control of the SIR model could form a starting point for
studying control in more complex models.

\bibliographystyle{plain}
\bibliography{refs}

\end{document}